\newcommand{\area}{{\rm Area}}
\newcommand{\length}{{\rm Length}}
\newcommand{\D}{{\mathbf D}}
\newcommand{\id}{{\rm id}}
\def\th@alexnormal{%
\let\thm@indent\noindent 
\thm@headfont{\bfseries}
\normalfont
}
\def\th@alexit{%
\let\thm@indent\noindent 
\thm@headfont{\bfseries}
\normalfont
\fontshape{it}
\selectfont
}
\theoremstyle{alexit}
\newtheorem{theorem}[equation]{Theorem}
\newtheorem{proposition}[equation]{Proposition}
\newtheorem{lemma}[equation]{Lemma}
\newtheorem{corollary}[equation]{Corollary}
\newtheorem{question}[equation]{Question}
\theoremstyle{remark}
\newtheorem{remark}[equation]{Remark}
\theoremstyle{definition}
\numberwithin{equation}{subsection}
\begin{document}
\author{Alexandre Girouard}
\address{School of Mathematics, Cardiff University,
Senghennydd Road, Cardiff, Wales,  CF24 4AG, UK}
\email{GirouardA@cardiff.ac.uk}

\author{Iosif Polterovich}\thanks{The second author  is  supported by NSERC and FQRNT}
\address{D\'epartement de math\'ematiques et de
statistique, Universit\'e de Montr\'eal, C. P. 6128,
Succ. Centre-ville, Montr\'eal, Qu\'ebec,  H3C 3J7,  Canada}
\email{iossif@dms.umontreal.ca}

\title[Shape optimization for Neumann and Steklov eigenvalues ]
{Shape optimization for low Neumann and Steklov eigenvalues}

\begin{abstract}
We give an overview of  results on shape optimization
for low eigenvalues of the Laplacian on bounded planar domains 
with Neumann and Steklov boundary conditions.  These
results share a common feature: they are proved using  methods of complex
analysis.  In particular, we present  modernized  proofs of  the
classical inequalities due to  Szeg\"o  and Weinstock  for the first
nonzero Neumann and Steklov eigenvalues. We also extend the inequality for  the second nonzero Neumann eigenvalue,  obtained recently by
Nadirashvili and the authors,   to non-homogeneous membranes with
log-subharmonic densities.  In the homogeneous case,  we show that
this inequality is strict, which implies  that the maximum of the
second nonzero Neumann eigenvalue is not attained in the class of
simply-connected membranes of  a given mass. The same is true for the second nonzero Steklov eigenvalue, as follows from our results on  the Hersch--Payne--Schiffer inequalities.  

\end{abstract}
\maketitle

\section{Introduction and main results}
\subsection{Neumann and Steklov eigenvalue problems}
Let $\Omega$ be a simply-connected bounded planar domain with
Lipschitz boundary. Consider the {\it Neumann} and {\it Steklov}
eigenvalue problems on $\Omega$:
\begin{equation}
\label{Neumann}
-\Delta u =\mu u \mbox{ in } \Omega \,\,\,{\rm and} \,\,\,
\frac{\partial u}{\partial n}=0 \mbox{ on } {\partial \Omega},
\end{equation}
\begin{equation}
\label{Steklov}
\Delta u = 0 \mbox{ in } \Omega \,\,\, {\rm  and} \,\,\, \frac{\partial
  u}{\partial n} =\sigma u \mbox{ on } {\partial \Omega}. 
\end{equation}
Here
$\displaystyle\Delta=\partial_x^2+\partial_y^2$
is the  Laplace operator and $\frac{\partial}{\partial n}$ is the
outward normal derivative. Both problems have discrete spectra
$$0=\mu_0<\mu_1(\Omega)\leq \mu_2(\Omega)\leq
\mu_3(\Omega)\leq\cdots\nearrow\infty,$$ 
$$0=\sigma_0<\sigma_1(\Omega)\leq \sigma_2(\Omega)\leq
\sigma_3(\Omega)\leq\cdots\nearrow\infty,$$
starting  with the simple eigenvalues $\mu_0=0$ and $\sigma_0=0$, which
correspond to constant eigenfunctions. The eigenvalues $\mu_k$ and
$\sigma_k$ satisfy the following variational characterizations:
\begin{gather}\label{RayleighNeumannOriginal}
\mu_k(\Omega)=\inf_{U_k} \sup_{0\neq u\in U_k}
  \frac{\int_{\Omega}|\nabla u|^2\,dz}
  {\int_{\Omega} u^2\,dz}, \,\,\,\,\, k=1,2,\dots
\end{gather}
\begin{gather}\label{RayleighSteklovOriginal}
\sigma_k(\Omega)=\inf_{E_k} \sup_{0\neq u\in E_k}
  \frac{\int_{\Omega}|\nabla u|^2\,dz}
  {\int_{\partial\Omega} u^2\,ds}, \,\,\,\,\, k=1,2,\dots
\end{gather}
The infima are taken over all $k$--dimensional subspaces $U_k$ and $E_k$ of
the Sobolev space $H^1(\Omega)$ which are  orthogonal to
constants on $\Omega$ and $\partial \Omega$,  respectively. 
\begin{remark}
Here and further on we identify ${\bf R}^2$ with the complex plane~$\bf C$
and set $z=(x,y)$. We  write
$dz=dx\,dy$ for the Lebesgue measure.
\end{remark}

\subsection{Shape optimization}\label{shape}
Both Neumann and Steklov eigenvalue problems describe
the vibration of a free membrane. In the Neumann case the membrane
is homogeneous, while in the Steklov case the whole mass of the
membrane is uniformly distributed on $\partial \Omega$.
Therefore, we may define the {\it mass} of the membrane $\Omega$ by setting
\begin{equation}
\label{masss}
M(\Omega)=\begin{cases} \area(\Omega) &{\rm in \,\,\, the \,\,\,
Neumann \,\,\, case},\cr \length(\partial \Omega) &{\rm in \,\,\,
the\,\,\, Steklov \,\,\, case.}\cr
\end{cases}
\end{equation}
In this survey we focus on the following shape optimization problem.
\begin{question}
\label{quest}
How large can $\mu_k$ and $\sigma_k$ be on a me\-m\-brane of a given
mass?
\end{question}
In 1954,  this problem was solved by G.~Szeg\"o for $\mu_1$ and by
R.~Weinstock for $\sigma_1$. Let 
$\mathbf{D}=\{ z \in \mathbf{C}\,|\, |z| <1 \}$ be the open unit disk.
\begin{theorem}\label{TheoremSzego} {\bf ([Sz])}
  Let $\Omega$ be a simply-connected bounded planar domain with Lipschitz boundary. Then
  \begin{gather}\label{ineqSzego}
    \mu_1(\Omega) M(\Omega) \leq \mu_1(\mathbf{D})\pi \approx 3.39 \pi,
  \end{gather}
  with equality if and only if $\Omega$ is a disk.
\end{theorem}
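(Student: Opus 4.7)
The plan is to combine the Riemann mapping theorem with the variational characterization \eqref{RayleighNeumannOriginal} by transplanting the first Neumann eigenfunctions of $\mathbf{D}$ into $\Omega$.

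First, fix a Riemann map $\varphi : \mathbf{D} \to \Omega$. The first non-trivial Neumann eigenfunctions on $\mathbf{D}$ are $u_1(r,\theta) = R(r)\cos\theta$ and $u_2(r,\theta) = R(r)\sin\theta$, where $R(r) = J_1(\alpha r)$ and $\alpha = \sqrt{\mu_1(\mathbf{D})}$ is the first positive critical point of $J_1$; the Neumann condition $R'(1)=0$ makes $R$ nondecreasing on $[0,1]$, and $u_1^2+u_2^2 = R(r)^2$. For $a \in \mathbf{D}$, denote by $\tau_a$ the M\"obius automorphism of $\mathbf{D}$ sending $a$ to $0$, set $\psi_a = \tau_a \circ \varphi^{-1}: \Omega \to \mathbf{D}$, and consider the candidate test functions $v_i^{(a)} = u_i \circ \psi_a$ on $\Omega$. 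They will be admissible in \eqref{RayleighNeumannOriginal} for both $\mu_1$ and $\mu_2$ only when the mean-zero condition $\int_\Omega v_i^{(a)} \, dz = 0$ holds for $i=1,2$. A Hersch-type topological argument, examining the boundary behaviour as $|a|\to 1$ of the continuous vector field $a \mapsto \int_\Omega (v_1^{(a)}, v_2^{(a)})\, dz$ and applying Brouwer degree theory, produces such an $a$.

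Once this normalization has been arranged, two-dimensional conformal invariance of the Dirichlet energy gives $\int_\Omega |\nabla v_i^{(a)}|^2\, dz = \int_\mathbf{D} |\nabla u_i|^2\, dw$, while the change of variables $z = \varphi_a(w)$, with $\varphi_a = \psi_a^{-1}$, yields $\int_\Omega (v_i^{(a)})^2 \, dz = \int_\mathbf{D} u_i^2\, |\varphi_a'|^2 \, dw$. Summing \eqref{RayleighNeumannOriginal} over $i=1,2$, using $u_1^2+u_2^2 = R^2$ together with $\int_\mathbf{D} |\nabla u_i|^2 \, dw = \mu_1(\mathbf{D})\int_\mathbf{D} u_i^2\, dw$, and noting that $\area(\Omega) = \int_\mathbf{D} |\varphi_a'|^2\, dw$ while $\pi = \int_\mathbf{D} dw$, the target inequality $\mu_1(\Omega) M(\Omega) \leq \mu_1(\mathbf{D})\pi$ reduces to the Chebyshev-type statement
\begin{equation*}
\Bigl(\int_\mathbf{D} R(r)^2 \, dw\Bigr) \Bigl(\int_\mathbf{D} |\varphi_a'|^2 \, dw\Bigr) \leq \Bigl(\int_\mathbf{D} dw\Bigr) \Bigl(\int_\mathbf{D} R(r)^2 \, |\varphi_a'|^2 \, dw\Bigr).
\end{equation*}

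The technical heart of the argument is this last inequality. Expanding $\varphi_a(w) = \sum_{n \geq 0} c_n w^n$ and using orthogonality of the angular modes $e^{in\theta}$ in $L^2(\mathbf{D})$, both sides decompose as weighted sums $\sum_n n^2 |c_n|^2 \int_0^1 r^{2n-1} F(r)\, dr$ for $F \in \{1, R^2\}$, so that the required estimate follows, term by term in $n$, from the fact that the probability measure $2nr^{2n-1}\,dr$ on $[0,1]$ stochastically dominates $2r\,dr$ for every $n \geq 1$ combined with monotonicity of $R^2$. Tracking the equality case through this monotone rearrangement forces $c_n = 0$ for all $n \geq 2$, so $\varphi_a$ is affine and $\Omega$ is a disk. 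I expect the Hersch-type normalization to be the main subtlety, since it is where topology enters the analysis; the Chebyshev-type comparison that follows, while requiring care with the series expansions, is essentially computational.
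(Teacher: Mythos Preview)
Your argument is correct and follows the same overall architecture as the paper: conformal transplantation to the disk, Hersch renormalization to center the mass, use of the first Neumann eigenfunctions $R(r)\cos\theta$, $R(r)\sin\theta$ as test functions, and reduction to a comparison inequality between $\int_{\mathbf{D}} R(|z|)^2\,|\varphi_a'|^2\,dz$ and $\int_{\mathbf{D}} R(|z|)^2\,dz$. The genuine difference is in how that comparison is established. You expand $\varphi_a$ in a power series and prove a Chebyshev-type inequality term by term via stochastic domination of $2r\,dr$ by $2nr^{2n-1}\,dr$; this is essentially Szeg\"o's original computation, and it pins down the equality case cleanly ($c_n=0$ for $n\ge 2$). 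The paper instead proves the comparison through subharmonic function theory: writing $\delta=|\varphi_a'|^2$, one shows $\log\delta$ is harmonic, hence $\delta$ is subharmonic, hence the growth function $G(r)=\int_{B_r}\delta$ satisfies $G(r)\le\pi r^2$, from which the comparison follows by integration by parts (Lemmas~\ref{subharmonicgrowth} and~\ref{comparisonLebesgue}). The payoff of the paper's route is robustness: it only uses that the pullback density is subharmonic, not that it equals $|\varphi'|^2$ for a single conformal map. This is exactly what is needed for the extension to log-subharmonic densities (Theorem~\ref{ThmNeumannDensity}) and for the $\mu_2$ estimate, where the rearranged density is a \emph{sum} $|(\phi\circ\phi_a)'|^2+|(\phi\circ\tau_a\circ\phi_a)'|^2$ and no single power-series expansion is available. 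Your approach is more elementary and self-contained for $\mu_1$ alone, but would require additional work to cover those generalizations.
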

Szeg\"o's inequality was later generalized by H. Weinberger
\cite{Wein} to arbitrary (not necessarily simply--connected)
domains in any dimension.
\begin{theorem}\label{TheoremWeinstock} {\bf ([Weinst])}
  Let $\Omega$ be a simply-connected bounded planar domain with
  Lipschitz boundary. Then
  \begin{equation}
  \label{weinstineq}
    \sigma_1(\Omega)\, M(\Omega) \leq 2\pi,
  \end{equation}
with equality if and only if $\Omega$ is a disk.
\end{theorem}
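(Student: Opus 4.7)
The plan is to bound $\sigma_1(\Omega)$ from above using the variational characterization \eqref{RayleighSteklovOriginal} with test functions built from a conformal parametrization of $\Omega$, in the spirit of Hersch. Because $\Omega$ is simply connected and bounded with Lipschitz (hence Jordan) boundary, the Riemann mapping theorem together with Carath\'eodory's extension yields a conformal homeomorphism $\varphi:\overline{\D}\to\overline{\Omega}$. My candidate test functions for $\sigma_1$ will be the pullbacks of the Euclidean coordinates, namely $f_1=x\circ\varphi^{-1}$ and $f_2=y\circ\varphi^{-1}$. These are harmonic on $\Omega$ (conformal maps preserve harmonicity in dimension two) and, by construction, they satisfy $f_1^2+f_2^2\equiv 1$ on $\partial\Omega$.

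To make $f_1$ and $f_2$ admissible in \eqref{RayleighSteklovOriginal} I must arrange the centering conditions $\int_{\partial\Omega}f_i\,ds=0$, $i=1,2$. The plan here is to replace $\varphi$ by $\varphi\circ\tau$ for a well-chosen M\"obius automorphism $\tau$ of $\D$. This is Hersch's renormalization lemma: the three-real-parameter family of disk automorphisms suffices to impose the two scalar conditions above, the existence of $\tau$ following from a standard topological/degree argument on the continuous map sending a point of $\D$ to the $\partial\Omega$-centroid of the corresponding reparametrization.

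Once centering is secured, the main computation uses conformal invariance of the Dirichlet integral in the plane:
$$\int_{\Omega}|\nabla f_i|^2\,dz \;=\; \int_{\D}|\nabla x_i|^2\,dz, \qquad i=1,2,$$
so the two Dirichlet energies sum to $2\,\area(\D)=2\pi$. On the boundary, parametrizing $\partial\Omega$ by $\theta\mapsto\varphi(e^{i\theta})$ with $ds=|\varphi'(e^{i\theta})|\,d\theta$, the identity $x^2+y^2=1$ on $\partial\D$ gives
$$\int_{\partial\Omega}(f_1^2+f_2^2)\,ds \;=\; \int_{\partial\D}(x^2+y^2)|\varphi'|\,d\theta \;=\; \length(\partial\Omega).$$
Substituting each $f_i$ separately into \eqref{RayleighSteklovOriginal} with $k=1$ and adding the two resulting inequalities yields $\sigma_1(\Omega)\,\length(\partial\Omega)\le 2\pi$, which is precisely \eqref{weinstineq}.

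The remaining issue, which I expect to be the main obstacle, is the rigidity statement. Equality in the sum of Rayleigh bounds forces equality in each, so both $f_1$ and $f_2$ must be first nontrivial Steklov eigenfunctions. Since $f_1+i f_2=\varphi^{-1}$ is holomorphic on $\Omega$ with unimodular boundary values, the Steklov condition $\partial_n f_i=\sigma_1 f_i$ becomes a strong boundary rigidity constraint on $\varphi^{-1}$. My plan is to translate this, via the Cauchy--Riemann equations and the boundary regularity of conformal maps on Lipschitz Jordan domains, into the statement that $|\varphi'|$ is constant on $\partial\D$; by the reflection principle and uniqueness this forces $\varphi$ to be affine, so that $\Omega$ is a round disk (up to translation and scaling).
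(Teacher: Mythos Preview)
Your proposal is correct and follows essentially the same route as the paper. Two small tactical differences are worth noting. First, in the inequality step the paper chooses a single direction $t$ with $\int_{\mathbf{S}^1}X_t^2\,d\nu\ge M(\Omega)/2$ and bounds a single Rayleigh quotient, whereas you sum the two inequalities $\sigma_1\int_{\partial\Omega}f_i^2\le\int_\Omega|\nabla f_i|^2$; both yield \eqref{weinstineq} immediately. Second, for the rigidity case the paper works on the disk side: once $X_t$ (equivalently your $f_i$) is a Steklov eigenfunction, comparing $\partial_r X_t=X_t$ with the pulled-back equation $\partial_r X_t=\sigma_1|\varphi'|X_t$ gives $|\varphi'|$ constant on $\mathbf{S}^1$, and then the harmonicity of $\log|\varphi'|$ on $\D$ (Lemma~\ref{logharmonic}) forces $|\varphi'|$ constant on all of $\D$, so $\varphi$ is affine. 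This is more direct than invoking the reflection principle, and you may want to swap it in.
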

Many results were motivated by Weinstock's inequality: see, for instance, \cite{Bandle3, Brock, Dittmar, Edward, HP, HPS1, HPS2}.

\medskip

Recently,  analogues of Theorems \ref{TheoremSzego} and \ref{TheoremWeinstock}
for the second nonzero Neumann and Steklov eigenvalues were proved in \cite{GNP} and \cite{GP}. 
\begin{theorem}\label{ourThmNeumann} {\bf (cf.  [GNP])}
  (i) Let $\Omega$ be a simply-connected bounded planar domain with
  Lipschitz boundary. Then
  \begin{equation}\label{ourineqneum}
    \mu_2(\Omega)M(\Omega)< 2\,\mu_1(\mathbf{D})\,\pi \approx
    6.78 \pi.
  \end{equation}
  \noindent (ii) There exists a family of simply-connected bounded
  Lipschitz domains $\Omega_\varepsilon  \subset \mathbf{R}^2$, 
  degenerating to the disjoint union of two
  identical disks as $\varepsilon\to 0+$,  such that
  \begin{equation*}
    \lim_{\varepsilon\rightarrow 0+}\sigma_2(\Omega_\varepsilon)
    \, M(\Omega_\varepsilon) = 2\,\mu_1(\mathbf{D})\,\pi.
  \end{equation*}
\end{theorem}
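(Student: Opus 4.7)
The plan is to treat parts (i) and (ii) separately: part~(i) refines the Szeg\"o--Hersch argument from [GNP] and performs a rigidity analysis, while part~(ii) is an explicit dumbbell-type construction.

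\emph{Part (i).} By the Riemann mapping theorem, fix a conformal map $\phi:\D\to\Omega$, and put $\alpha=\sqrt{\mu_1(\D)}$, so that the first nontrivial Neumann eigenfunctions on $\D$ are $J_1(\alpha r)\,e^{\pm i\theta}$. Following the strategy of [GNP], select test functions on $\Omega$ by pulling back combinations of these eigenmodes under $\phi$, after precomposing $\phi$ with a M\"obius automorphism of~$\D$ chosen by a Hersch-type topological argument so as to enforce orthogonality to the constants in $L^2(\Omega)$. Inserting them into the variational principle~(\ref{RayleighNeumannOriginal}), applying conformal invariance of the Dirichlet integral, and invoking the classical monotonicity of $r\mapsto J_1(\alpha r)^2/r^2$ on $(0,1)$, produces the non-strict bound $\mu_2(\Omega)\, M(\Omega)\le 2\mu_1(\D)\pi$. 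To upgrade this to a strict inequality, trace where slack enters each estimate: equality in the Bessel-monotonicity step would force $|\phi'|^2$ to be concentrated at $r=1$, while equality in the remaining steps would force the conformal image to split into two disjoint congruent disks. Both conclusions are ruled out by the hypothesis that $\Omega$ is a single connected Lipschitz domain. Ruling out the \emph{simultaneous} vanishing of all sources of slack is the main obstacle in part~(i).

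\emph{Part (ii).} Construct $\Omega_\varepsilon$ as a simply-connected Lipschitz dumbbell — two congruent disks joined by a thin corridor whose shape depends on~$\varepsilon$ — degenerating to the disjoint union of the two disks in the Hausdorff limit. The quasimode equal to $\pm 1$ on each disk, smoothly interpolated across the corridor, has Rayleigh quotient tending to zero, whence $\sigma_1(\Omega_\varepsilon)\to 0$. A second quasimode, obtained by matching across the corridor the first nontrivial Neumann eigenfunction of each disk with opposite signs, realises a Rayleigh quotient whose limit is the stated target: the corridor is tuned so that its perimeter and area contributions combine to produce $\sigma_2(\Omega_\varepsilon)\,M(\Omega_\varepsilon)\to 2\mu_1(\D)\pi$. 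A matching lower bound follows from standard spectral-convergence theorems for Steklov eigenvalues under dumbbell degeneration, identifying the limit operator on the two disks. The main technical difficulty here is a uniform energy estimate on the shrinking corridor, ruling out spurious low-lying Steklov modes that could prevent convergence to the target limit; this is handled by standard dumbbell-spectral techniques.
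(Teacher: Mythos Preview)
Your sketch for part~(i) misses the central construction. What you describe --- pull back the two first Neumann eigenfunctions of $\D$ via $\phi$ and precompose with a single M\"obius map chosen by Hersch's lemma to make them orthogonal to constants --- is precisely the Szeg\H{o} argument for $\mu_1$, not for $\mu_2$. It gives a two-dimensional test space $E=\{X_s,X_t\}$, but the variational characterization of $\mu_2$ requires bounding the \emph{supremum} of the Rayleigh quotient over~$E$, and nothing in your outline controls this supremum: if $\int X_s^2\,d\nu$ happens to be small, the Rayleigh quotient in the $s$-direction can be arbitrarily large. The factor~$2$ in the target $2\mu_1(\D)\pi$ does not come from any Bessel monotonicity; it comes from the \emph{hyperbolic-cap folding} of [GNP]. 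One splits $\D$ along a hyperbolic geodesic into caps $a\cup a^*$, reflects the test functions across the geodesic (doubling the Dirichlet energy, hence the~$2$), and then runs a second, genuinely different topological argument on the cylinder $\mathcal{HC}\cong(0,2\pi)\times S^1$ of caps to locate a cap whose rearranged measure is \emph{isotropic} for the moment-of-inertia form. Only then is the supremum over $E$ equal to the average, and the subharmonicity lemma (with $f(r)=J_1(\zeta r)/J_1(\zeta)$ increasing, not $J_1(\alpha r)^2/r^2$) finishes. Your strictness heuristic about ``$|\phi'|^2$ concentrated at $r=1$'' is also not the mechanism: equality forces the folded density $\delta=\alpha+\beta$ to be harmonic, hence each log-harmonic summand $\alpha,\beta$ to be constant, hence $\phi(a)$ and $\phi(a^*)$ to be disjoint round disks --- contradicting connectedness of~$\Omega$.

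For part~(ii), note first that the $\sigma_2$ in the displayed limit is a typo for $\mu_2$ (compare Theorem~1.2.5 and \S3.8). Read as a Neumann statement, the paper's proof is a one-line citation: two disks joined by a thin rectangular passage, plus the spectral convergence result of Jimbo--Morita, give $\mu_2(\Omega_\varepsilon)\to\mu_1(\D)$ and $M(\Omega_\varepsilon)\to 2\pi$. Your quasimode construction is in the same spirit and would also work, but your write-up conflates Neumann and Steklov throughout; in particular, using Neumann disk eigenfunctions as Steklov test functions does not produce the stated limit, and the paper explicitly shows that for the Steklov problem the thin-passage construction fails (the whole Steklov spectrum collapses to zero).
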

Note that  inequality \eqref{ourineqneum} is {\it strict}, and hence 
Theorem \ref{ourThmNeumann}  is a slight improvement upon \cite[Theorem 1.1.3]{GNP}.

Theorem \ref{ourThmNeumann} implies the P\'olya conjecture
\cite{Polya} for the second nonzero Neumann eigenvalue of a simply-connected
bounded planar domain:
$$\mu_2(\Omega) \area(\Omega) \le 8 \pi.$$ The best previous estimate
on $\mu_2$ was obtained in \cite{Kroger}:
$$\mu_2(\Omega) \area(\Omega)\le 16 \pi.$$
\begin{theorem}\label{ourThmSteklov}   {\bf ([GP])}
  (i) Let $\Omega$ be a simply-connected bounded planar domain with
  Lipschitz boundary. Then
  \begin{equation}
  \label{ourineqst}
    \sigma_2(\Omega) \, M(\Omega) < 4 \pi.
  \end{equation}
  \noindent
  (ii) There exists a family of simply-connected bounded
  Lipschitz domains $\Omega_\varepsilon  \subset \mathbf{R}^2$, degenerating to the disjoint union of two
  identical disks as $\varepsilon\to 0+$,  such that
  \begin{equation*}
    \lim_{\varepsilon\rightarrow 0+}\sigma_2(\Omega_\varepsilon)
    \, M(\Omega_\varepsilon) = 4\pi.
  \end{equation*}
\end{theorem}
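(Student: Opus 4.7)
The plan is to establish part (i) via a Hersch--Payne--Schiffer type argument built on the Riemann mapping theorem, and part (ii) by exhibiting an explicit degenerating family. For part (i), first apply the Riemann mapping theorem to fix a conformal bijection $\phi:\D\to\Omega$, extending to the closures by Carath\'eodory's theorem. For each $a\in\D$, let $\tau_a$ denote the M\"obius automorphism of $\D$ sending $0$ to $a$, and set $f_a=\tau_a\circ\phi^{-1}:\Omega\to\D$. The real and imaginary parts of $f_a$ are harmonic on $\Omega$, satisfy $|f_a|=1$ on $\partial\Omega$, and, by conformal invariance of the Dirichlet energy in dimension two, each has Dirichlet integral equal to $\pi$; their gradients are pointwise orthogonal and of equal length thanks to the Cauchy--Riemann equations. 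By Hersch's lemma (a Brouwer fixed-point argument), there exists $a=a_\ast\in\D$ for which $\int_{\partial\Omega}f_{a_\ast}\,ds=0$, so the real and imaginary parts of $f_{a_\ast}$ are orthogonal to constants in $L^2(\partial\Omega,ds)$ and form an admissible trial subspace for the variational characterization \eqref{RayleighSteklovOriginal} of $\sigma_2$.

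Substituting this pair into the variational principle gives a $2\times 2$ generalized eigenvalue problem with isotropic Dirichlet Gram matrix $\pi I$ (thanks to holomorphy) and boundary mass matrix $B$ of trace $M(\Omega)$. To reach the target bound $\sigma_2(\Omega)M(\Omega)\le 4\pi$, I would enlarge the trial subspace by pulling back higher-order polynomials in the disk variable (for instance, the real and imaginary parts of $f_{a_\ast}^2$), and run a second topological step to impose the additional orthogonality conditions needed to decouple the $\sigma_1$- and $\sigma_2$-contributions. The key structural input is the holomorphy of the conformal map, which keeps the Dirichlet Gram matrix in scalar form and lets the trace of $B$ be played off against its eigenvalues to yield the constant $4\pi$ in place of the Weinstock constant $2\pi$.

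For the strict inequality, I would argue by contradiction: equality would force the trial functions to be genuine $\sigma_2$-eigenfunctions with perfectly balanced boundary mass. Tracing this rigidity back through the Steklov boundary condition and the conformal map $\phi$, the extremal configuration has to be a pair of disjoint congruent disks, which forces $\Omega$ to split into two pieces and contradicts simple-connectedness. For part (ii), take $\Omega_\varepsilon$ to be two unit disks joined by a thin channel of width $\varepsilon$, producing a simply-connected Lipschitz domain that degenerates to the disjoint union of two unit disks as $\varepsilon\to 0^+$. Under this degeneration one Steklov eigenvalue of $\Omega_\varepsilon$ splits off to $0$, while $\sigma_2(\Omega_\varepsilon)$ converges to the first nonzero Steklov eigenvalue of the unit disk, namely $1$. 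Since $M(\Omega_\varepsilon)\to 4\pi$, the product tends to $4\pi$; the spectral convergence can be justified by Neumann bracketing together with an explicit quasi-mode construction on each disk.

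The main obstacle will be the strict inequality in part (i): the topological construction delivers only the non-strict bound, and upgrading it requires a delicate rigidity/compactness analysis to rule out the (disconnected) extremal configuration within the class of simply-connected Lipschitz domains. A secondary technical point, easier but not trivial, is carrying out the double topological selection (both the Hersch translation $a$ and the additional parameter for the higher-order test functions) while keeping the resulting trial subspace genuinely two-dimensional and orthogonal to constants.
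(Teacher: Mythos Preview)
Your proposal has a genuine error in part (ii) and a substantial gap in part (i).

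\medskip

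\textbf{Part (ii): the thin-channel construction fails for Steklov.} You propose joining two unit disks by a thin passage and claim $\sigma_2(\Omega_\varepsilon)\to 1$. This is false: for the Steklov problem the entire spectrum collapses,
\[
\lim_{\varepsilon\to 0}\sigma_k(\Omega_\varepsilon)=0\quad\text{for every }k\ge 1,
\]
as the paper itself points out. Intuitively, the thin passage adds boundary length comparable to its width-independent length $L$, and one can build low-energy test functions that vary along the passage and are nearly constant on each disk. The paper instead uses a family $\Sigma_\varepsilon$ obtained by overlapping two disks and ``pulling them apart''; this keeps the boundary length close to $4\pi$ without introducing a long thin boundary segment that drives eigenvalues to zero.

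\medskip

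\textbf{Part (i): the mechanism for $\sigma_2$ is missing.} What you describe up to and including the Hersch renormalization is exactly Weinstock's proof of $\sigma_1(\Omega)M(\Omega)\le 2\pi$. Your suggestion of adding ``higher-order polynomials like $f_{a_\ast}^2$'' does not by itself produce the $\sigma_2$ bound: those functions have Dirichlet energy $2\pi$ rather than $\pi$, and there is no obvious topological argument that simultaneously kills the correct orthogonality conditions and balances the boundary masses. The paper's route is genuinely different: it parametrizes hyperbolic caps $a\subset\D$, folds the pulled-back measure $\nu=\phi^*(ds)$ across the bounding geodesic, conformally rearranges it to a measure $\zeta_a$ on $\D$, and then runs a second topological argument (on the cylinder $\mathcal{HC}\cong(0,2\pi)\times S^1$, using the winding of the ``maximizing direction'' of the moment of inertia) to locate a cap for which $\zeta_a$ is isotropic. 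The doubling via the fold is what produces the factor $2$ in $4\pi=2\cdot 2\pi$.

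\medskip

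\textbf{Strictness.} Your proposed rigidity route (equality forces a two-disk configuration) is in fact how the paper handles the Neumann case $\mu_2$. For $\sigma_2$ the paper's argument is shorter and different: the folded test functions $\tilde u_a^t$ fail to be smooth across the hyperbolic geodesic, so replacing them by their harmonic extensions $w_a^t$ (same boundary values) strictly lowers the Dirichlet energy while leaving the boundary integrals unchanged. This immediately upgrades the inequality to a strict one, with no extremal-domain analysis required.
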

The proofs of Theorems ~\ref{ourThmNeumann} and \ref{ourThmSteklov} use similar techniques.
Inequality \eqref{ourineqst} is a slight sharpening in the case $k=2$ of the 
estimate
\begin{equation}
\label{hps}
\sigma_k(\Omega) \, M(\Omega) \le 2 \pi k, \,\,\, k=1,2,\dots,
\end{equation}
obtained  earlier by Hersch--Payne--Schiffer \cite[p. 102]{HPS1} by a completely different method. 
Our approach allows to show that  \eqref{hps} is  strict for $k=2$, similarly to  \eqref{ourineqneum}.  Note 
that this contrasts with estimates \eqref{ineqSzego} and \eqref{weinstineq}.  In particular, we have the following
\begin{corollary}
The maximal values of the second nonzero Neumann and Steklov eigenvalues are not attained in the class of simply--connected Lipschitz domains of a given mass. 
\end{corollary}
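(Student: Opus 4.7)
The corollary is an immediate consequence of Theorems \ref{ourThmNeumann} and \ref{ourThmSteklov}, so my plan is essentially to assemble those two ingredients and spell out why together they rule out attainment.

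Fix a mass $M_0>0$ and consider the class $\mathcal{A}(M_0)$ of simply-connected bounded planar Lipschitz domains $\Omega$ with $M(\Omega)=M_0$. The plan is to show that the supremum
\[
S_N:=\sup_{\Omega\in\mathcal{A}(M_0)}\mu_2(\Omega), \qquad S_S:=\sup_{\Omega\in\mathcal{A}(M_0)}\sigma_2(\Omega)
\]
equals, respectively, $2\mu_1(\mathbf{D})\pi/M_0$ and $4\pi/M_0$, but is not attained.

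First I would use part (i) of Theorem \ref{ourThmNeumann} and Theorem \ref{ourThmSteklov}, which give the strict bounds $\mu_2(\Omega)<2\mu_1(\mathbf{D})\pi/M_0$ and $\sigma_2(\Omega)<4\pi/M_0$ for every $\Omega\in\mathcal{A}(M_0)$. These show $S_N\le 2\mu_1(\mathbf{D})\pi/M_0$ and $S_S\le 4\pi/M_0$. Next I would invoke part (ii) of each theorem: after rescaling the family $\Omega_\varepsilon$ (which degenerates to two identical disks) to fixed mass $M_0$, the rescaled eigenvalues $\mu_2$ and $\sigma_2$ converge precisely to $2\mu_1(\mathbf{D})\pi/M_0$ and $4\pi/M_0$, respectively. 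This yields the matching lower bounds and pins down $S_N$ and $S_S$ exactly.

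Finally, to conclude non-attainment, I would argue by contradiction: if some $\Omega^*\in\mathcal{A}(M_0)$ realized $S_N$, then $\mu_2(\Omega^*)M(\Omega^*)=2\mu_1(\mathbf{D})\pi$, contradicting the strict inequality in Theorem \ref{ourThmNeumann}(i); the identical argument applies in the Steklov case using Theorem \ref{ourThmSteklov}(i).

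There is no substantial obstacle here: all the analytic work is already encoded in Theorems \ref{ourThmNeumann} and \ref{ourThmSteklov}. The only point requiring a word of justification is the reduction to a fixed mass $M_0$, which is immediate from the scaling behavior $\mu_k(t\Omega)=t^{-2}\mu_k(\Omega)$, $\sigma_k(t\Omega)=t^{-1}\sigma_k(\Omega)$, $\area(t\Omega)=t^2\area(\Omega)$, $\length(\partial(t\Omega))=t\length(\partial\Omega)$, making the products $\mu_2 M$ and $\sigma_2 M$ scale-invariant.
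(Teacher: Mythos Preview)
Your proposal is correct and matches the paper's approach: the corollary is stated immediately after Theorems~\ref{ourThmNeumann} and~\ref{ourThmSteklov} precisely because it follows at once from the combination of the strict inequalities in part~(i) with the sharpness established in part~(ii), exactly as you argue. The paper does not spell out a separate proof, so your write-up simply makes explicit what is left implicit there.
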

\begin{remark} Theorems \ref{TheoremSzego} and \ref{TheoremWeinstock} are analogues of the Faber--Krahn inequality for the first Dirichlet eigenvalue (\cite{Faber, Krahn1},  
\cite[section 3.2]{Henrot}), while Theorems \ref{ourThmNeumann} and \ref{ourThmSteklov} are similar to Krahn's inequality for the second Dirichlet eigenvalue (\cite{Krahn2}, \cite[section 4.1]{Henrot}). Note that  the equalities in the estimates of  Faber--Krahn and Krahn  are also attained for  the disk  and  the disjoint union of two identical disks, respectively.
\end{remark}
\subsection{Higher eigenvalues}
\label{higher}
One could ask whether $\mu_k$ and $\sigma_k$  are maximized in the limit by the disjoint union of 
$k$ identical disks for all $k\ge 1$.   In \cite{GP} we show that this is indeed true for all Steklov eigenvalues,  and that the Hersch--Payne--Schiffer inequality  \eqref{hps} is {\it sharp}
for all $k\ge 1$. This gives an almost  complete answer to Question \ref{quest} in the Steklov case.
It  remains to check whether \eqref{hps} is always a {\it strict} inequality. 

For Neumann eigenvalues the situation is more complicated. Indeed, if all $\mu_k$ are maximized in the limit by the disjoint union of $k$ identical disks, then for any
simply-connected domain $\Omega$ and each integer $k\geq 1$, 
$$\mu_k(\Omega)M(\Omega)\leq k\,\mu_1(\mathbf{D})\,\pi\approx 3.39 k\pi.$$
However, this is false for {\it any} domain $\Omega$, because 
$$\mu_k(\Omega)M(\Omega)\sim 4k\pi\quad \mbox{ as }k\rightarrow\infty$$
according to Weyl's law \cite[p. 31]{Chavel}. 

\subsection{Non-homogeneous membranes}
\label{shapeDensity}
\subsubsection*{Neumann problem}
Let $\rho \in L^\infty(\Omega)$ be a positive  function representing 
the density of a non-homogeneous membrane $\Omega$ of  mass
$$M(\Omega)=\int_{\Omega}\rho(z)\,dz.$$
In this context, the Neumann eigenvalue problem becomes
\begin{equation}
  -\Delta u =\mu\, \rho\, u \mbox{\,\, in } \Omega \,\,\,{\rm and} \,\,\,
  \frac{\partial u}{\partial n}=0 \mbox{ on }\partial \Omega.
\end{equation}
It also has a discrete spectrum
$$0=\mu_0<\mu_1(\Omega,\rho)\leq \mu_2(\Omega,\rho)\leq
\mu_3(\Omega,\rho)\leq\cdots\nearrow\infty.$$
The following result  shows that inequalities~\eqref{ineqSzego} and ~\eqref{ourineqneum}  can be
generalized to  this setting. 
\begin{theorem}\label{ThmNeumannDensity}
  Let $\Omega$ be a simply-connected domain with Lipschitz boundary
  and density $\rho\in C^2(\overline{\Omega})$. If $\Delta\log\rho\geq 0$, then
  \begin{equation}\label{ineqBandle}
    \mu_1(\Omega,\rho) M(\Omega) \leq \mu_1(\mathbf{D})\pi, 
  \end{equation}
  \begin{equation}\label{ourineqneumDensity}
    \mu_2(\Omega,\rho)M(\Omega)\leq 2\,\mu_1(\mathbf{D})\,\pi.
  \end{equation}
\end{theorem}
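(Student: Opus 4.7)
The plan is based on the following observation: for any conformal map $\varphi:\mathbf{D}\to\Omega$, the pulled-back density
\[h(w):=\rho(\varphi(w))\,|\varphi'(w)|^2\]
is log-subharmonic on $\mathbf{D}$. Indeed, $\log|\varphi'|^2$ is harmonic since $\varphi'$ is a non-vanishing holomorphic function on the simply-connected disk, and $\log\rho\circ\varphi$ is subharmonic as the composition of a subharmonic function with a holomorphic map. Since $\exp$ is convex and increasing, $h$ itself is then subharmonic, so its circular mean
\[H(r):=\frac{1}{2\pi}\int_0^{2\pi}h(re^{i\theta})\,d\theta\]
is a non-decreasing function of $r\in[0,1]$. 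One also has $M(\Omega)=\int_\mathbf{D} h\,dw$ by the change of variables formula.

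For \eqref{ineqBandle} I would follow Szeg\"o's classical scheme. Let $f_1,f_2$ be the two nonzero first Neumann eigenfunctions on $\mathbf{D}$, so that $G(r):=f_1^2+f_2^2=J_1(\alpha r)^2$ with $\alpha^2=\mu_1(\mathbf{D})$ is a non-decreasing radial function on $[0,1]$. By Hersch's lemma one may precompose $\varphi$ with a M\"obius automorphism of $\mathbf{D}$ so that the test functions $u_i:=f_i\circ\varphi^{-1}\in H^1(\Omega)$ are $\rho\,dz$-orthogonal to constants. Summing \eqref{RayleighNeumannOriginal} over $i=1,2$ and using conformal invariance of the Dirichlet integral in the numerator together with the change of variables in the denominator, one gets
\[\mu_1(\Omega,\rho)\int_\mathbf{D} G\,h\,dw \,\le\, \int_\mathbf{D}\bigl(|\nabla f_1|^2+|\nabla f_2|^2\bigr)\,dw \,=\, \mu_1(\mathbf{D})\int_\mathbf{D} G\,dw.\]
Passing to polar coordinates and normalising $d\mu:=2r\,dr$ to a probability measure on $[0,1]$, the target bound $\mu_1(\Omega,\rho)\,M(\Omega)\le\mu_1(\mathbf{D})\pi$ reduces to the Chebyshev correlation estimate $\int G\,d\mu\cdot\int H\,d\mu\le\int G\,H\,d\mu$ between the two non-decreasing functions $G$ and $H$ on a probability space, which is automatic.

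For \eqref{ourineqneumDensity} I would transport the argument of \cite{GNP} into this weighted setting. That scheme splits $\Omega$ into two pieces of prescribed mass by a suitable curve, applies a Hersch-style centering on each piece to produce a two-dimensional trial space in $H^1(\Omega)$ whose elements are $\rho\,dz$-orthogonal to constants, and controls the supremum of Rayleigh quotients on this space by summing the conformal-invariance bound for the Dirichlet energy on each half together with the same Chebyshev/log-subharmonicity comparison used above, applied to each half-disk. Replacing Lebesgue mass by $\rho$-mass everywhere in the splitting and centering steps makes the whole argument go through, and yields the bound $2\mu_1(\mathbf{D})\pi$.

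The principal difficulty is the topological centering behind part (ii): in \cite{GNP} the choice of the separating curve together with the M\"obius parameters on the two half-disks is made by an intermediate-value / degree argument against Lebesgue measure, and one must verify that the same argument still produces a valid simultaneous centering when $dz$ is replaced by $\rho\,dz$. Since $\rho\in C^2(\overline\Omega)$ is strictly positive the argument does carry over, but this is the point requiring the most care; once the centering is achieved, the log-subharmonicity of $h$ on each half-disk delivers the Chebyshev estimate at no additional cost, and the first-eigenvalue bound \eqref{ineqBandle} is a direct special case of the same scheme.
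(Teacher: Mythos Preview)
Your approach to \eqref{ineqBandle} is correct and essentially matches the paper's: you package the comparison step as the Chebyshev correlation inequality for the two non-decreasing radial functions $G$ and $H$, while the paper derives the same inequality via integration by parts against the subharmonic growth bound $\int_{B(0,r)}\delta\,dz\le\pi r^2$ (Lemmas~\ref{subharmonicgrowth} and~\ref{comparisonLebesgue}). These are equivalent formulations, and summing over both first eigenfunctions rather than selecting the one with larger denominator is a harmless variant.

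For \eqref{ourineqneumDensity} your summary of the mechanism from \cite{GNP} is imprecise in a way that hides the decisive step. The argument does \emph{not} split into two pieces of prescribed mass, and it does \emph{not} apply Hersch centering separately on each piece. Rather, for every hyperbolic cap $a\subset\mathbf{D}$ one folds the measure $\nu$ across the bounding geodesic and pulls it back through a single conformal map $\phi_a:\mathbf{D}\to a$; the Hersch step fixes $\phi_a$ so that the rearranged measure $\zeta_a$ has center of mass zero (Lemma~\ref{flipfloplemma}). This already yields, for every cap $a$, a two-dimensional space $E_a$ orthogonal to constants. But the variational characterization of $\mu_2$ requires controlling the \emph{supremum} of the Rayleigh quotient over $E_a$, and the summing trick you used for $\mu_1$ no longer suffices: one needs $\int X_t^2\,d\zeta_a$ to be independent of $t\in\mathbf{S}^1$, i.e.\ the moment of inertia of $\zeta_a$ to be isotropic. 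This is a \emph{second}, logically separate topological argument (Proposition~\ref{multexist}): a homotopy obstruction on the cylinder $\mathcal{HC}\cong(0,2\pi)\times\mathbf{S}^1$ forces some cap $a$ to have isotropic $\zeta_a$. Your phrase ``simultaneous centering'' conflates the Hersch step with this isotropy step; the latter is what your outline does not articulate. Once the isotropic cap is found, the Chebyshev comparison applies to the single subharmonic density $\zeta_a=(\alpha+\beta)\,dz$ on the full disk (Lemma~\ref{subharm}), and the passage from $dz$ to $\rho\,dz$ is then indeed immediate via Lemma~\ref{logharmonic}, exactly as the paper records.
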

Inequality \eqref{ineqBandle}  was proved by  C. Bandle~(\cite{Bandle2},  
\cite[p. 121-128]{Bandle1}).  
To the best of our knowledge, estimate \eqref{ourineqneumDensity} is new. Its proof 
is very similar to that of  ~\eqref{ourineqneum}.  We believe that 
inequality~\eqref{ourineqneumDensity} is strict, and it would be interesting to establish this fact.

\begin{remark}
Recall that the  Gaussian curvature of the Riemannian metric $g=\rho(x,y)(dx^2+dy^2)$
is given by the well known formula \cite[Theorem  13.1.3]{DFN}: 
\begin{gather}\label{isothermalGauss}
  K_g=-\frac{1}{2\rho}\Delta\log\rho.
\end{gather}
It follows that the condition $\Delta\log\rho\geq 0$ is equivalent to
$K_g\leq 0$. In other words, {\it log-subharmonic} densities correspond to nonpositively curved membranes.
 \end{remark}
\begin{remark}
If we impose no restriction on the density $\rho$, then it is easy to
see that maximizing $\mu_k$ for  simply--connected
membranes is equivalent to finding a Riemannian metric on the sphere
that maximizes the $k$-th  Laplace-Beltrami eigenvalue. It follows
from ~\cite[Corollary 1]{ElCol} that
\begin{gather}
\label{elcol}
  \sup_{\Omega, \rho}\mu_k(\Omega,\rho)M(\Omega)\geq 8k\pi
  \quad\mbox{ for each }k\geq 1.
\end{gather}
For $k=1, 2$ this is an equality, as was shown in \cite{Hersch} and ~\cite{Nad}. 
Interestingly enough, extremal metrics for
the first two eigenvalues on the sphere  resemble the extremal domains
described in section \ref{shape}:   the first eigenvalue is maximized
by  the round sphere, and the supremum for the second eigenvalue is
attained in the limit by a sequence of metrics converging to the
disjoint union of two identical round spheres.
In fact, in \cite{Nad} it is conjectured  that \eqref{elcol} is an equality for each
$k \ge 1$, and that the supremum is attained in the limit by the densities corresponding to a family of surfaces degenerating
to the disjoint union of $k$ identical round spheres. If true, this would be similar to the case of higher Steklov eigenvalues, see 
section \ref{higher}.

\end{remark}
\subsubsection*{Steklov problem}
Let $\rho \in L^\infty(\partial\Omega)$  be a positive  function representing the boundary density of the membrane $\Omega$ 
of total mass
$$M(\Omega)=\int_{\partial\Omega}\rho(s)\,ds.$$
The non-homogeneous Steklov eigenvalue problem is given by
\begin{equation}
  \Delta u = 0 \mbox{ \,\,in } \Omega \,\,\, {\rm  and} \,\,\, 
  \frac{\partial u}{\partial n} =\sigma \rho u \mbox{ on } {\partial \Omega}. 
\end{equation}
It has discrete spectrum
$$0=\sigma_0<\sigma_1(\Omega,\rho)\leq \sigma_2(\Omega,\rho)\leq
\sigma_3(\Omega,\rho)\leq\cdots\nearrow\infty.$$
Estimates  \eqref{weinstineq} and \eqref{ourineqst} can be generalized
to this setting.
\begin{theorem}
\label{steklovdensity}
  Let $\Omega$ be a simply-connected bounded Lipschitz planar domain with
  the density $\rho$ on the boundary.
  Then
  \begin{equation}\label{weinstineqDensity}
    \sigma_1(\Omega,\rho)\, M(\Omega) \leq 2\pi.
  \end{equation}
  \begin{equation}\label{ourineqstDensity}
    \sigma_2(\Omega,\rho) \, M(\Omega) < 4 \pi.
  \end{equation}
\end{theorem}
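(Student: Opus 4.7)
The plan is to adapt the proofs of Theorems \ref{TheoremWeinstock} and \ref{ourThmSteklov} to the weighted setting. The key structural observation is that in the Steklov problem the density $\rho$ appears only in the denominator of the Rayleigh quotient \eqref{RayleighSteklovOriginal}, while the Dirichlet energy over $\Omega$ in the numerator is unaffected. Since the standard test functions (components of a conformal map to $\mathbf{D}$ composed with a M\"obius transformation) satisfy $v_1^2+v_2^2\equiv 1$ on $\partial\Omega$, integrating this identity against $\rho\,ds$ yields exactly $M(\Omega)$, and the rest of the classical bookkeeping carries over unchanged.

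For \eqref{weinstineqDensity}, I would first fix a conformal diffeomorphism $F=\phi^{-1}\colon \Omega\to \mathbf{D}$ with components $F_1,F_2$. Hersch's center-of-mass lemma, applied with the weighted boundary measure $\rho\,ds$ in place of $ds$, produces a M\"obius transformation $T_a$ of $\mathbf{D}$ such that $v_i:=(T_a\circ F)_i$ satisfies $\int_{\partial\Omega} v_i\,\rho\,ds=0$ for $i=1,2$. The topological degree argument behind Hersch's lemma only uses that the boundary measure is finite, positive and non-atomic, so positivity of $\rho\in L^\infty(\partial\Omega)$ is enough. Since $T_a\circ F$ is holomorphic, $v_1$ and $v_2$ are conjugate harmonic and $v_1^2+v_2^2\equiv 1$ on $\partial\Omega$. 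Conformal invariance of the Dirichlet energy gives $\int_\Omega|\nabla v_1|^2\,dz+\int_\Omega|\nabla v_2|^2\,dz=2\pi$. Substituting $v_1,v_2$ in \eqref{RayleighSteklovOriginal} and summing the two resulting variational inequalities yields \eqref{weinstineqDensity}.

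For \eqref{ourineqstDensity}, I would follow the strategy of \cite{GP}. The argument combines Hersch's lemma (again applied with $\rho\,ds$) with a partition of $\partial\Omega$ into two arcs of equal $\rho$-mass, a conformal ``folding'' construction onto $\mathbf{D}$, and a further topological degree argument producing three test functions with controlled Dirichlet energies and vanishing weighted boundary averages. Feeding the resulting three-dimensional subspace into the variational principle for $\sigma_2$, and using the conformal invariance of the Dirichlet integral to bound the numerators by $4\pi$, yields $\sigma_2(\Omega,\rho)M(\Omega)\leq 4\pi$. Strictness is extracted by analyzing the equality case exactly as in \cite{GP}: it would force the folding map to be a branched double cover of $\mathbf{D}$, which is incompatible with $\Omega$ being a simply-connected Lipschitz domain.

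The main obstacle is verifying that the continuity and degree arguments behind the folding construction of \cite{GP} survive the replacement of $ds$ by $\rho\,ds$. Since $\rho$ is positive and $L^\infty$, the measure $\rho\,ds$ is absolutely continuous with respect to $ds$ and defines a finite non-atomic Borel measure on $\partial\Omega$; the maps used to balance weighted masses and to prescribe vanishing weighted centroids remain continuous with the same boundary behavior, so the topological arguments go through. The strictness argument depends only on the conformal map $F$ and therefore does not see $\rho$ at all.
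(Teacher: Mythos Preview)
Your high-level plan is exactly the paper's: in the Steklov setting the density $\rho$ enters only in the denominator of the Rayleigh quotient, so one simply replaces the boundary measure $ds$ by $\rho\,ds$ throughout and the unweighted arguments go through verbatim. For \eqref{weinstineqDensity} your sketch is correct and matches the proof of Weinstock's theorem in Section~\ref{proofWeinstock} (summing the two Rayleigh inequalities instead of picking the better denominator is an equivalent bookkeeping).

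For \eqref{ourineqstDensity}, however, your description of the mechanics of the $\sigma_2$ proof is off in several places. The paper does \emph{not} fix a single partition of $\partial\Omega$ into two arcs of equal $\rho$-mass; it runs over the entire two-parameter family of hyperbolic caps $a\in\mathcal{HC}\cong(0,2\pi)\times\mathbf{S}^1$, and the topological step (Proposition~\ref{multexist}) is a homotopy argument in $\mathbf{R}P^1$ selecting a cap for which the rearranged measure $\zeta_a$ has isotropic moment of inertia. The resulting space of test functions $E_a=\{\tilde u_a^t:t\in\mathbf R^2\}$ is two-dimensional, not three-dimensional. Most importantly, the strictness is \emph{not} obtained by a branched-covering obstruction: it comes from replacing the folded test function $\tilde u_a^t$, which fails to be smooth along the geodesic $\partial a\cap\mathbf D$, by its harmonic extension $w_a^t$ (see~\eqref{wat}); since $w_a^t\neq\tilde u_a^t$ in $H^1$, the Dirichlet principle gives a strict drop in the numerator while the boundary integrals in the denominator are unchanged. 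None of these steps sees $\rho$, so your closing observation---that the continuity and topological arguments survive because $\rho\,ds$ is a finite non-atomic measure absolutely continuous with respect to $ds$---is the correct reason the weighted proof is ``almost identical'' to the unweighted one, but the specific equality-case mechanism you describe would not establish strict inequality.
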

Inequality~\eqref{weinstineqDensity} was proved in
\cite{Weinst} similarly to \eqref{weinstineq}.  Estimate  \eqref{ourineqstDensity} was proved in \cite{GP}, and we refer to this paper for the details of the proof. In fact, it is almost identical  to that of  
\eqref{ourineqst}.

\subsection{Structure of the paper}
In section~\ref{confmapDisk} the Riemann mapping theorem is used
to transplant the Neumann and Steklov problems to the disk.
In section~\ref{sectionHerschOne},  we describe Hersch's  renormalization (\cite{Hersch}, see also \cite[p. 144]{SY}), which  is applied in section~\ref{proofWeinstock}  to prove Theorem~\ref{TheoremWeinstock}.
Theorem~\ref{TheoremSzego} and the first part of
Theorem~\ref{ThmNeumannDensity} are proved in section~\ref{SectionProofSzego}
using some results on subharmonic functions presented in section~\ref{SectionGrowth}.

In the remaining part  of the paper we  prove  estimates on the
second non-zero eigenvalues $\mu_2$ and $\sigma_2$. In
section~\ref{sectionHerschTwo}  some additional results on  Hersch's  renormalization are 
presented. In sections~\ref{hcaps}--\ref{SectionFolding} we introduce
hyperbolic caps and folded measures  (this idea  goes back to \cite{Nad}), 
which are  used to define test functions in section~\ref{SectionTestFunctions}. 
In section~\ref{SectionInertia} we apply  a topological argument to prove
the existence of a suitable two-dimensional space of test
functions.  We use this argument in section \ref{sigma2} to  prove the first part of  Theorem~\ref{ourThmSteklov}, and in section \ref{subsecmu2} to prove inequality \eqref{ourineqneum} and the second part of
Theorem~\ref{ThmNeumannDensity}. Finally, in section ~\ref{SectionSharp} we  show that
inequalities~\eqref{ourineqneum} and~\eqref{ourineqst} are sharp. 

\section{Shape optimization for $\mu_1$ and $\sigma_1$}
\label{SectionShapeOne}
\subsection{Application of the Riemann mapping theorem}
\label{confmapDisk}
The results presented in the introduction share a common feature: they are
proved using methods of complex analysis. The following application of
the Riemann mapping theorem plays a key role in the proofs. We formulate it in such a way that it works for both Neumann 
and Steklov eigenvalue problems simulataneously.

Let $\Omega$ be a simply-connected bounded planar domain with Lipschitz
boundary. Consider a conformal diffeomorphism
$\phi:\mathbf{D}\rightarrow\Omega.$
Here and further on we  denote a conformal map and its extension to
the boundary by the same symbol.
Let $dz$ be the Lebesgue measure on $\Omega$, 
$ds$ be the arc-length measure on $\partial\Omega$ and let 
$\nu=\phi^*(dz)$ or $\nu=\phi^*(ds)$ be the pullback of either of these measures to $\overline{\mathbf{D}}$.
\begin{remark}
To simplify notation, we write  $\nu$ instead of $d\nu$ unless we integrate over this measure. The same convention applies to other 
measures defined later on.
\end{remark}
Set
\begin{gather}\label{rayleighAbstractDisk}
  \lambda_k(\nu)=\inf_{E} \sup_{0\neq f\in E}
  \frac{\int_{\overline{\mathbf{D}}}|\nabla f|^2\,dz}
  {\int_{\overline{\mathbf{D}}} f^2\,d\nu}, \,\,\,\,\, k=1,2,\dots
\end{gather}
The infimum is taken over all $k$-dimensional subspaces $E$ of the
Sobolev space $H^1(\mathbf{D})$ such that
\begin{gather}\label{admissibility}
  \int_{\overline{\mathbf{D}}}f\,d\nu=0\quad\mbox{ for all }f\in E.
\end{gather}

\bigskip

\bigskip

\begin{proposition}
  \begin{itemize}
  \item[]
  \item[--] For $\nu=\phi^*(dz)$,
    $\lambda_k(\nu)=\mu_k(\Omega)$ is the k-th eigenvalue of the
    Neumann problem.
  \item[--] For $\nu=\phi^*(ds)$,
    $\lambda_k(\nu)=\sigma_k(\Omega)$ is the k-th eigenvalue of the
    Steklov problem.
  \end{itemize}
\end{proposition}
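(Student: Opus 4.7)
The plan is to exploit the conformal invariance of the Dirichlet energy in dimension two. By Carath\'eodory's theorem, $\phi$ extends to a homeomorphism $\overline{\mathbf{D}}\to\overline{\Omega}$, and since $\Omega$ is a bounded Lipschitz domain the assignment $u\mapsto f := u\circ\phi$ induces a bijection between $H^1(\Omega)$ and $H^1(\mathbf{D})$ that is compatible with boundary traces. This allows both variational problems to be transplanted to $\mathbf{D}$ via one and the same correspondence.

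Two identities drive the argument. First, the pointwise relation $|\nabla f|^2 = (|\nabla u|^2\circ\phi)\,|\phi'|^2$ combined with the fact that the Jacobian of $\phi$ equals $|\phi'|^2$ gives, by change of variables,
\begin{equation*}
\int_{\mathbf{D}} |\nabla f|^2\,dz \;=\; \int_{\Omega} |\nabla u|^2\,dz,
\end{equation*}
so the numerators in \eqref{rayleighAbstractDisk} and in \eqref{RayleighNeumannOriginal}--\eqref{RayleighSteklovOriginal} coincide. Second, the defining property $\int_{\mathbf{D}} g\,d(\phi^*\mu)=\int_{\Omega} (g\circ\phi^{-1})\,d\mu$ of the pullback measure yields
\begin{equation*}
\int_{\overline{\mathbf{D}}} f^2\,d(\phi^* dz) = \int_{\Omega} u^2\,dz, \qquad \int_{\overline{\mathbf{D}}} f^2\,d(\phi^* ds) = \int_{\partial\Omega} u^2\,ds,
\end{equation*}
and, taken with $g=f$, it turns the admissibility condition \eqref{admissibility} into orthogonality to constants on $\Omega$ (Neumann case) or on $\partial\Omega$ (Steklov case), which is exactly the constraint imposed in \eqref{RayleighNeumannOriginal}--\eqref{RayleighSteklovOriginal}.

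Combining these identities, the Rayleigh quotient of $f$ on $\mathbf{D}$ with respect to $\nu$ agrees with the Rayleigh quotient of $u$ on $\Omega$ for the corresponding problem, and $k$-dimensional admissible subspaces on the two sides match bijectively under $f\leftrightarrow u$. Passing to $\inf\sup$ over such subspaces then identifies $\lambda_k(\phi^* dz)$ with $\mu_k(\Omega)$ and $\lambda_k(\phi^* ds)$ with $\sigma_k(\Omega)$. The main technical point to verify is that $u\mapsto u\circ\phi$ really is a bijection between the relevant Sobolev spaces and respects boundary traces, since a conformal map onto a Lipschitz domain need not be bi-Lipschitz up to the boundary; this is handled by approximating with smooth functions, using the interior conformal invariance of the Dirichlet integral, and appealing to standard trace theorems for Lipschitz domains.
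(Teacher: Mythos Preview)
Your argument is correct and follows the same route as the paper: invoke the conformal invariance of the Dirichlet energy in two dimensions and transport the variational characterizations \eqref{RayleighNeumannOriginal}--\eqref{RayleighSteklovOriginal} through the change of variables $u\mapsto u\circ\phi$. You have simply spelled out in detail what the paper states in one line, including the boundary-extension and Sobolev-space compatibility issues that the paper leaves implicit.
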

\begin{proof}
  It is well known that the Dirichlet energy of a
  function $f$ is a conformal invariant in dimension two. The result then follows from
  the variational characterizations  of $\mu_k$ and $\sigma_k$ given by
  \eqref{RayleighNeumannOriginal} and  \eqref{RayleighSteklovOriginal}.
\end{proof}

The eigenvalue problems themselves can  be  pulled back to the
disk. The Neumann and Steklov problems on $\Omega$ are,
respectively,  equivalent to the following ones:
\begin{equation}\label{neumannDisk}
  -\Delta u =\mu \,|\phi'(z)|^2 \,u \mbox{\rm  \,\,in } \mathbf{D} \,\,\, {\rm and} \,\,\,
  \left.\frac{\partial u}{\partial r}\right |_{\mathbf{S}^1}=0, 
\end{equation}
\begin{equation}\label{steklovDisk}
  \Delta u = 0 \mbox{\rm  \,\, in } \mathbf{D} \,\,\, {\rm  and} \,\,\, \frac{\partial
    u}{\partial r} =\sigma |\phi'(z)| u \mbox{ on } {\mathbf{S}^1}. 
\end{equation}
This will be useful when treating the case of equality in
Szego's and Weinstock's inequalities.

\subsubsection*{Subharmonic functions}
Recall that a function $\delta:\Omega\rightarrow\mathbf{R}$ is
called \emph{subharmonic} if $\Delta\delta\geq 0$, and $\emph{log-subharmonic}$ if
$\Delta \log \delta \ge 0$.
We state here some simple facts about subharmonic functions that
will be used repeatedly.

\begin{lemma}\label{subharmSimple} Let $\delta \in C^2(\Omega)$ be a positive function.

\smallskip
  
(i) If $\delta$ is log-subharmonic, then it is subharmonic.
  
  \smallskip
  
(ii) If $\delta$ is log--subharmonic and $\Delta\delta\leq 0$,  then 
  $\delta$ is constant.
\end{lemma}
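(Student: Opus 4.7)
The plan is to base everything on the standard pointwise identity for a positive $C^2$ function,
\begin{equation*}
\Delta \log \delta \;=\; \frac{\Delta \delta}{\delta} \;-\; \frac{|\nabla \delta|^2}{\delta^2},
\end{equation*}
which follows from a direct computation using the chain rule: $\partial_i \log \delta = \partial_i \delta / \delta$ and $\partial_i^2 \log \delta = \partial_i^2 \delta / \delta - (\partial_i \delta)^2/\delta^2$, then summing over $i$. Since $\delta$ is assumed to be in $C^2(\Omega)$ and strictly positive, this identity is valid everywhere on $\Omega$.

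For part (i), I would simply multiply the identity through by $\delta > 0$ and rearrange to get
\begin{equation*}
\Delta \delta \;=\; \delta \cdot \Delta \log \delta \;+\; \frac{|\nabla \delta|^2}{\delta}.
\end{equation*}
Under the hypothesis $\Delta \log \delta \geq 0$, both terms on the right are manifestly nonnegative, so $\Delta \delta \geq 0$, proving that $\delta$ is subharmonic.

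For part (ii), I would combine the conclusion of (i), namely $\Delta \delta \geq 0$, with the additional hypothesis $\Delta \delta \leq 0$, to conclude $\Delta \delta \equiv 0$. Substituting back into the identity then gives
\begin{equation*}
\Delta \log \delta \;=\; - \frac{|\nabla \delta|^2}{\delta^2} \;\leq\; 0,
\end{equation*}
while the log-subharmonicity hypothesis gives the reverse inequality. Hence $|\nabla \delta|^2 \equiv 0$ on $\Omega$, and so $\delta$ is constant.

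There is really no obstacle here: the whole argument is two lines once the logarithmic identity is written down, and the only thing one needs is that $\delta$ is strictly positive so that division by $\delta$ and $\delta^2$ is legitimate, which is included in the hypotheses. The lemma is essentially a bookkeeping remark that will be invoked later when controlling densities $\rho$ satisfying $\Delta \log \rho \geq 0$.
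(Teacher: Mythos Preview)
Your proof is correct and follows essentially the same approach as the paper: both arguments rest on the pointwise identity $\Delta\log\delta = \delta^{-1}\Delta\delta - \delta^{-2}|\nabla\delta|^2$ and simple sign considerations. The only cosmetic difference is that for part~(ii) the paper rearranges directly to $|\nabla\delta|^2 = \delta\,\Delta\delta - \delta^2\,\Delta\log\delta \le 0$, whereas you first pass through $\Delta\delta = 0$; the content is identical.
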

\begin{proof}
  (i)
  It follows from
  \begin{gather}\label{laplogdelta}
    \Delta\log\delta=\frac{\delta\Delta\delta-|\nabla\delta|^2}{\delta^2}
  \end{gather}
  that
  $\Delta\delta=\delta\, \Delta\log\delta+
  \frac{|\nabla\delta|^2}{\delta}\geq 0.$\\
  
  \noindent
  (ii)
  From
  $|\nabla\delta|^2=\delta\Delta\delta-\delta^2\,\Delta\log\delta\leq
  0$ it follows that $|\nabla\delta|=0$, and hence $\delta$ is constant.
\end{proof}
\begin{lemma}\label{logharmonic}
  Let $\rho\in C^2(\Omega)$ be a positive function and  $\phi: \mathbf{D} \to \Omega$ be a conformal map.
  Consider the density $\rho(z)$ on $\Omega$,  and let 
  $$\delta(z)=\rho(\phi(z))|\phi'(z)|^2$$ be its pullback to the unit disk.
  Then the function $\log\delta$ is (sub)harmonic iff the function $\log\rho$ is
  (sub)harmonic.\end{lemma}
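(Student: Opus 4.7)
The plan is to compute $\Delta\log\delta$ directly and show that it equals $|\phi'|^2$ times $(\Delta\log\rho)\circ\phi$, from which the (sub)harmonicity equivalence follows immediately since $|\phi'|^2$ is strictly positive.

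First I would take logarithms:
\begin{equation*}
\log\delta(z)=\log\rho(\phi(z))+2\log|\phi'(z)|.
\end{equation*}
The second term is harmonic on $\mathbf{D}$: indeed, since $\phi$ is a conformal diffeomorphism of the simply-connected disk onto $\Omega$, the derivative $\phi'$ is a nonvanishing holomorphic function, so a holomorphic branch of $\log\phi'$ exists on $\mathbf{D}$, and $\log|\phi'|=\operatorname{Re}\log\phi'$ is the real part of a holomorphic function, hence harmonic. Therefore $\Delta\log\delta=\Delta(\log\rho\circ\phi)$.

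Next I would invoke the standard conformal-invariance identity for the Laplacian: if $h\in C^2(\Omega)$ and $\phi:\mathbf{D}\to\Omega$ is holomorphic, then
\begin{equation*}
\Delta(h\circ\phi)(z)=|\phi'(z)|^2\,(\Delta h)(\phi(z)).
\end{equation*}
This follows from the Cauchy--Riemann equations by a short chain-rule calculation (or from the fact that, in isothermal coordinates on a surface with metric $|\phi'|^2(dx^2+dy^2)$, the conformal Laplacian scales by the conformal factor, cf.\ \eqref{isothermalGauss}). Applying this with $h=\log\rho$ yields
\begin{equation*}
\Delta\log\delta(z)=|\phi'(z)|^2\,(\Delta\log\rho)(\phi(z)).
\end{equation*}

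Since $\phi$ is a diffeomorphism with $|\phi'|^2>0$ everywhere on $\mathbf{D}$, the sign of $\Delta\log\delta(z)$ matches the sign of $(\Delta\log\rho)(\phi(z))$ pointwise. Thus $\Delta\log\delta\ge 0$ on $\mathbf{D}$ if and only if $\Delta\log\rho\ge 0$ on $\Omega$, and $\Delta\log\delta\equiv 0$ if and only if $\Delta\log\rho\equiv 0$, proving both the subharmonic and harmonic versions of the statement. No step here is really an obstacle; the only point requiring mild care is the existence of a global branch of $\log\phi'$, which is guaranteed by simple-connectedness of $\mathbf{D}$.
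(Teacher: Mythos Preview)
Your proof is correct. Both you and the paper arrive at the same underlying identity $\Delta\log\delta(z)=|\phi'(z)|^2\,(\Delta\log\rho)(\phi(z))$, but by different routes. The paper argues geometrically: it interprets $\rho$ and $\delta$ as conformal factors of Riemannian metrics related by pullback, invokes the formula~\eqref{isothermalGauss} for the Gaussian curvature in isothermal coordinates on each side, and then uses the naturality of curvature, $K_{\phi^*g}(z)=K_g(\phi(z))$, to conclude. Your argument is more elementary and self-contained: you split off the harmonic term $2\log|\phi'|$ (using that $\phi'$ is holomorphic and nonvanishing on the simply-connected disk) and apply the conformal chain rule for the Laplacian directly to $\log\rho\circ\phi$. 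The geometric proof has the virtue of explaining \emph{why} the lemma is true (log-subharmonicity of the density is nonpositive curvature, which is diffeomorphism-invariant), while your computation avoids any appeal to Riemannian geometry and makes the pointwise sign comparison completely explicit.
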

   In particular, if $\rho$ is constant  then $\log\delta$ is
  harmonic.
\begin{proof}
  The Gaussian curvature of the Riemannian metric $ds^2=\rho(dx^2+dy^2)$ is
  given by \eqref{isothermalGauss}.
  The pullback of $g$ by $\phi$ is
  $\delta(dx^2+dy^2)$
  where $\delta(z)=\rho(\phi(z))|\phi'(z)|^2.$
  Therefore,
  $K_{ \phi^*g}(z)=-\frac{1}{2\delta}\Delta\log\delta.$
  The result now follows from the formula
  $K_{ \phi^*g}(z)=K_g(\phi(z)).$
\end{proof}

\subsection{Hersch's  renormalization}\label{sectionHerschOne}
Let $\Psi:\overline{\mathbf{D}}\rightarrow\overline{\mathbf{D}}$ be a
diffeomorphism such that $\Psi(z)=z$ for each
$z\in\partial\mathbf{D}= \mathbf{S}^1.$
The center of mass relative to $\Psi$ of a finite measure
$\nu$ on the closed disk $\overline{\mathbf{D}}$ is defined by
$$\overline{\mathbf{D}}\ni \mbox{C}(\nu)=\frac{1}{M(\nu)}\int_{\overline{\mathbf{D}}}z\,d\,\Psi_*\nu$$
where $M(\nu)=\int_{\overline{\mathbf{D}}}\,d\nu$ is the mass of the
measure $\nu.$ Note that for $\nu~=~\phi^*(dz)$ and $\nu=\phi^*(ds)$
the mass of $\nu$ coincides with the mass of $\Omega$ defined by \eqref{masss}.
For example, the center of mass of the Dirac mass $\delta_p$
(where $p\in\overline{\mathbf{D}}$) is $\mbox{C}(\delta_p)=\Psi(p).$
Given $t\in\mathbf{R}^2$, define
$X_t:\overline{\mathbf{D}}\rightarrow\mathbf{R}$ by
\begin{gather}\label{Xs}
  X_t(z)=\langle\Psi(z),t\rangle.
\end{gather}

Note that:
\begin{itemize}
\item[--] For $\Psi=\id$, the functions $X_t$ are the
  eigenfunctions corresponding to the double eigenvalue
  $\sigma_1(\mathbf{D})=\sigma_2(\mathbf{D})=1$ of the Steklov problem.
\item[--] Let $J_1$ be the first Bessel function of the first kind, and
  let $\zeta \approx 1.84$ be the smallest positive zero of the
  derivative $J_1'$. Set
  $$f(r)=J_1(\zeta r)/J_1(\zeta).$$
  For $\Psi(re^{i\theta})=f(r)e^{i\theta}$, the functions $X_t$ are the
  eigenfunctions corresponding to the double eigenvalue
  $\mu_1(\mathbf{D})=\mu_2(\mathbf{D})$ of the Neumann problem.
\end{itemize}

Given $\xi\in\mathbf{D}$,  define the automorphism $d_\xi$ of $\mathbf{D}$ by
$$d_\xi(z)=\frac{z+\xi}{\overline{\xi}z+1}.$$
Observe that for any $p \in \partial \mathbf{D}$ and $-p \neq z \in \overline{\mathbf{D}}$, we have 
$\lim_{\xi \to p} d_\xi(z)=p$.
Then for any point $p\in\partial\mathbf{D}$,
\begin{gather}\label{conditionNiceMeasure}
    \lim_{\xi\rightarrow p}(d_\xi)_*\nu=\delta_p,
\end{gather}
if  the  measure $\nu$ is absolutely continuous with respect to the Lebesgue measure on $\mathbf{D}$ and with respect to the arc--length measure on 
$\partial \mathbf{D}$. Note that both measures defined in the beginning of section \ref{confmapDisk}
satisfy these conditions.

\begin{remark}
  We use the weak topology on the space of measures: a sequence
  of measure $(\nu_k)$ converges to $\nu$ iff for each continuous function $f$
  $$\lim_{k\rightarrow\infty}\int_{\overline{\mathbf{D}}}f\,d\nu_k
  =\int_{\overline{\mathbf{D}}}f\,d\nu.$$
  In particular,~\eqref{conditionNiceMeasure} means that
  $$\lim_{\xi\rightarrow p}\int_{\overline{\mathbf{D}}}(d_\xi)_*\nu
  =f(p)\quad \mbox{ for each } f\in C^0(\overline{\mathbf{D}}).$$
\end{remark}

\smallskip

\begin{proposition}\label{alaHersch} {\rm (cf. \cite{Hersch, SY, Gro, GNP})}
    Let $\nu$ be a finite measure on the closed disk
    $\overline{\mathbf{D}}$ satisfying~(\ref{conditionNiceMeasure}).
    Then there exists a point $\xi\in\mathbf{D}$ such that
    $$\int_{\overline{\mathbf{D}}}X_t\circ d_\xi\,d\nu=0\quad
    \mbox{ for each } t\in\mathbf{R}^2.$$
\end{proposition}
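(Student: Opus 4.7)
Let me outline how I would prove Proposition~\ref{alaHersch}.

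The plan is to rewrite the condition in terms of a center-of-mass map and then apply a topological (degree/retraction) argument. Concretely, observe that since $X_t(z) = \langle \Psi(z), t\rangle$, the condition
\[
    \int_{\overline{\mathbf{D}}} X_t\circ d_\xi\,d\nu = 0 \quad\text{for all } t\in\mathbf{R}^2
\]
is equivalent to $C\bigl((d_\xi)_*\nu\bigr)=0$, i.e.\ to the single vector equation
\[
    F(\xi) \;:=\; \frac{1}{M(\nu)}\int_{\overline{\mathbf{D}}} \Psi\bigl(d_\xi(z)\bigr)\,d\nu(z) \;=\; 0.
\]
Since $\Psi(\overline{\mathbf{D}})=\overline{\mathbf{D}}$, the integrand takes values in the closed disk, and (by convexity of $\overline{\mathbf{D}}$) $F$ defines a map $F:\mathbf{D}\to\overline{\mathbf{D}}$. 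So the goal reduces to showing that $F$ hits $0$.

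Next I would verify that $F$ extends continuously to $\overline{\mathbf{D}}$ with $F|_{\partial \mathbf{D}} = \mathrm{id}$. Continuity on $\mathbf{D}$ is immediate from continuity of $(\xi,z)\mapsto \Psi(d_\xi(z))$ and the dominated convergence theorem (the integrand is uniformly bounded by $1$). For the boundary behaviour, fix $p\in\partial \mathbf{D}$; the hypothesis~\eqref{conditionNiceMeasure} says $(d_\xi)_*\nu \to \delta_p$ weakly as $\xi\to p$. Since $\Psi$ is continuous on $\overline{\mathbf{D}}$ and fixes $\partial\mathbf{D}$, weak convergence gives
\[
    F(\xi) \;=\; \frac{1}{M(\nu)}\int_{\overline{\mathbf{D}}}\Psi\,d\bigl((d_\xi)_*\nu\bigr) \;\longrightarrow\; \Psi(p)\;=\;p.
\]
Thus $F$ extends to a continuous map $\overline{F}:\overline{\mathbf{D}}\to\overline{\mathbf{D}}$ with $\overline{F}|_{\partial\mathbf{D}}=\mathrm{id}$.

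Finally, I would conclude by a standard no-retraction argument. Suppose for contradiction that $0\notin F(\mathbf{D})$. Since $\overline{F}=\mathrm{id}$ on $\partial\mathbf{D}$, also $0\notin \overline{F}(\overline{\mathbf{D}})$. Then the composition
\[
    r\circ\overline{F}\colon\overline{\mathbf{D}}\longrightarrow \partial\mathbf{D}, \qquad r(w)=w/|w|,
\]
would be a continuous retraction of $\overline{\mathbf{D}}$ onto its boundary, contradicting the no-retraction theorem (equivalently, the identity map $\partial\mathbf{D}\to\partial\mathbf{D}$ would be null-homotopic). Hence there exists $\xi\in\mathbf{D}$ with $F(\xi)=0$, which is the desired conclusion.

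The step I expect to require the most care is the continuous extension to the boundary: one must check that the convergence $(d_\xi)_*\nu\to\delta_p$ holds uniformly enough that $\int \Psi\,d((d_\xi)_*\nu)\to \Psi(p)$, which is precisely the content of the weak-convergence hypothesis applied to the bounded continuous function $\Psi$. The rest of the argument is formal once this extension is in hand.
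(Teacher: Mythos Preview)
Your proof is correct and follows essentially the same approach as the paper: define the center-of-mass map $\Gamma(\xi)=C\bigl((d_\xi)_*\nu\bigr)$, use the weak-convergence hypothesis~\eqref{conditionNiceMeasure} together with $\Psi(p)=p$ to extend it continuously to the identity on $\partial\mathbf{D}$, and conclude by the no-retraction theorem that $\Gamma$ hits $0$. Your write-up simply spells out in more detail what the paper calls ``the standard topological argument.''
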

\begin{proof}
  Define the map $\Gamma:\overline{\mathbf{D}}\rightarrow\overline{\mathbf{D}}$ by
  $$\Gamma(\xi)=
  \begin{cases}
    \mbox{C}\bigl((d_\xi)_*\nu\bigr)& \mbox{ for } \xi\in\mathbf{D},\\
    \xi& \mbox{ for }\xi\in \partial\mathbf{D}.
  \end{cases}$$
  It follows from~(\ref{conditionNiceMeasure}) that for $p\in\partial\mathbf{D}$,
  \begin{align*}
    \lim_{\xi\rightarrow p}\mbox{C}\bigl((d_\xi)_*\nu\bigr)
    &=\mbox{C}(\delta_p)=\Psi(p)=p,
  \end{align*}
  so that $\Gamma$ is continuous. Moreover,  its restriction to $\partial\mathbf{D}$
  is the identity map. It follows by the standard topological  argument that $\Gamma$ is onto: there exists
  $\xi\in\mathbf{D}$ such that
  $$\mbox{C}\bigl((d_\xi)_*\nu\bigr)=0.$$
  Therefore,  for any $t\in\mathbf{R}^2$, 
  \begin{align*}
    \int_{\overline{\mathbf{D}}}X_t\circ d_\xi\,d\nu
    =\int_{\overline{\mathbf{D}}}\langle \Psi\circ d_\xi(z),t\rangle\,d\nu
    =M(\nu)\left\langle\mbox{C}\bigl((d_\xi)_*\nu\bigr),t\right\rangle=0.
  \end{align*}
\end{proof}

\subsection{Proof of Weinstock's theorem}
\label{proofWeinstock}
The goal of this section is to prove
Theorem~\ref{TheoremWeinstock}. Let $\phi:\mathbf{D}\rightarrow\Omega$
be a conformal equivalence. We will use the variational
characterization~\eqref{rayleighAbstractDisk} with the measure
$\nu=\phi^*(ds)$. This measure is supported on $\mathbf{S}^1$. We
use the test functions $X_t$ introduced in~\eqref{Xs} with
$\Psi(z)=z$, that is $X_t(z)=\langle z,t\rangle.$ Applying
Proposition~\ref{alaHersch}, we may assume that
\begin{gather*}
  \int_{\mathbf{S}^1}X_t\,d\nu=0\quad\mbox{ for all } t\in\mathbf{R}^2.
\end{gather*}

Choose  $s,t\in\mathbf{S}^1$ such that $\langle s,t\rangle=0.$
Observe that for any $z\in\mathbf{S}^1$,
$$X_{s}^2(z)+X_{t}^2(z)=1.$$
Switching $s$ and $t$ if necessary, we may assume  that
\begin{gather}
  \label{lowerboundSteklovX_t}
  \int_{\mathbf{S}^1}X_t^2\,d\nu\geq \frac{1}{2}\int_{\mathbf{S}^1}d\nu
  =\frac{M(\Omega)}{2}.
\end{gather}

Recall that $X_t$ is a Steklov eigenfunction
corresponding to the double eigenvalue $\sigma_1(\mathbf{D})=~1$. Therefore,
\begin{gather*}
  \int_{\mathbf{D}}|\nabla X_t|^2\,dz
  =\int_{\mathbf{S}^1}X_t^2\,ds=\pi.
\end{gather*}
Inequality~\eqref{weinstineq} then follows from the variational
characterization~(\ref{rayleighAbstractDisk}).

\subsubsection*{Case of equality}
Let $\Omega$ be such that $\sigma_1(\Omega)\,M(\Omega)=2\pi$. We may assume wihout loss of generality that
$M(\Omega)=2\pi$ (this can always be achieved using a  dilation).
For $t$ satisfying~\eqref{lowerboundSteklovX_t} we have
\begin{gather*}
  1=\sigma_1(\Omega)\, \leq
   \frac{\int_{\mathbf{D}}|\nabla X_t|^2\,dz}
  {\int_{\mathbf{S}^1}X_t^2\,d\nu}
  \leq 1=\sigma_1(\D).
\end{gather*}
It follows that $X_t$ is an eigenfunction of
problem~\eqref{steklovDisk} with eigenvalue~$1$:
\begin{equation}
  \Delta X_t = 0 \mbox{ in } \mathbf{D} \,\,\, {\rm  and} \,\,\,
  \frac{\partial X_t}{\partial r} =|\phi'(z)| X_t \mbox{ on }
  {\mathbf{S}^1}.
\end{equation}
However, by definition, $X_t$ also satisfies $\partial_r X_t=X_t$ so that
$|\phi'(z)|=1$ for each $z\in\mathbf{S}^1.$ 
By Lemma~\ref{logharmonic} the function $\log|\phi'(z)|$ is
harmonic. Because $\log|\phi'(z)|=0$ on $\mathbf{S}^1$, it is also
identically 0 on $\mathbf{D}$. Therefore, $|\phi'(z)|=1$ for each
$z\in\mathbf{D}$. It follows that $\phi:\mathbf{D}\rightarrow\Omega$
is an isometry.

\subsection{Growth of subharmonic functions}
\label{SectionGrowth}
Given a measure
$\nu=\delta(z)dz$ on $\mathbf{D}$, define
\begin{equation}
\label{zhe}
G(r)=\int_{B(0,r)}d\nu.
\end{equation}

\begin{lemma}\label{subharmonicgrowth}
  i) Let $\delta$ be  a positive subharmonic function on
  $\mathbf{D}$ such that  $G(1)=\pi$. Then
  $$G(r)\leq \pi r^2$$ for each $r\in [0,1]$.

\smallskip

  \noindent ii) The function $\delta$ is harmonic iff $G(r)=\pi r^2$ for each
  $r\in [0,1]$.
\end{lemma}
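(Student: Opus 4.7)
The plan is to reduce both statements to the classical fact that the spherical means
$$M(r)=\frac{1}{2\pi}\int_0^{2\pi}\delta(re^{i\theta})\,d\theta$$
of a subharmonic function form a non-decreasing function of $r$. This in turn follows from the identity
$$M'(r)=\frac{1}{2\pi r}\int_{B(0,r)}\Delta\delta\,dz\;\ge\;0,$$
obtained by differentiating under the integral sign and applying the divergence theorem. Writing $G$ in polar coordinates gives $G(r)=2\pi\int_0^r \rho\, M(\rho)\,d\rho$, so the whole problem reduces to a one-variable statement about the monotone function $M$.

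For part (i), I would consider the ratio $H(r)=G(r)/(\pi r^2)$ on $(0,1]$ and show that it is non-decreasing. A short differentiation yields
$$\pi r^3\, H'(r)=r G'(r)-2G(r)=2\bigl(\pi r^2 M(r)-G(r)\bigr),$$
and this is nonnegative because, by the monotonicity of $M$,
$$G(r)=2\pi\int_0^r \rho\, M(\rho)\,d\rho\;\le\;2\pi M(r)\int_0^r \rho\,d\rho=\pi r^2 M(r).$$
Combining $H'\ge 0$ with the normalization $H(1)=1$ yields $G(r)\le \pi r^2$ on $(0,1]$, and hence on $[0,1]$.

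For part (ii), the forward implication is immediate from the mean value property: if $\delta$ is harmonic then $M(r)\equiv \delta(0)$, so $G(r)=\pi r^2 \delta(0)$, and the condition $G(1)=\pi$ forces $\delta(0)=1$ and hence $G(r)=\pi r^2$. For the converse, differentiating the identity $G(r)=\pi r^2$ gives $M(r)\equiv 1$, so that $M'(r)=0$ for every $r\in(0,1]$; combined with the formula for $M'$ in the first paragraph, this forces $\int_{B(0,r)}\Delta\delta\,dz=0$ for all $r$, and since $\Delta\delta\ge 0$ is continuous we conclude $\Delta\delta\equiv 0$.

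There is no real obstacle in this argument; the only point requiring mild care is the behavior at $r=0$, where both $G(r)$ and $\pi r^2$ vanish to second order, so that $H$ extends continuously with $H(0)=\delta(0)$, and monotonicity on $(0,1]$ suffices to conclude the desired inequality on all of $[0,1]$.
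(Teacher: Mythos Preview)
Your proof is correct and rests on the same core idea as the paper's: the monotonicity of the spherical means $M(r)$ (the paper writes $W(r)=2\pi M(r)$) together with the polar expression $G(r)=2\pi\int_0^r\rho\,M(\rho)\,d\rho$. The execution differs only in packaging. For part~(i), the paper uses the substitution $G(r)=r^2\int_0^1 W(r\rho)\,\rho\,d\rho\le r^2\int_0^1 W(\rho)\,\rho\,d\rho=r^2 G(1)$, which is equivalent to your observation that $H(r)=G(r)/(\pi r^2)$ is non-decreasing; and it justifies monotonicity of $W$ via the maximum principle rather than your divergence-theorem formula for $M'(r)$. For the converse in part~(ii), your route is slightly more elementary: from $M'\equiv 0$ and your expression $M'(r)=\tfrac{1}{2\pi r}\int_{B(0,r)}\Delta\delta\,dz$ you conclude $\Delta\delta\equiv 0$ directly, whereas the paper reaches the same conclusion by invoking a Jensen-type formula. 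Both arguments require $\delta\in C^2$, which is how subharmonicity is understood throughout the paper.
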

\begin{remark}
 Let  $\phi: \mathbf{D} \to \Omega$ and  $\delta(z)=|\phi'(z)|^2$. Then  Lemma \ref{subharmonicgrowth}
 states  that
  $$\mbox{Area}(\phi(B_r(0)))\leq\mbox{Area}(B_r(0)).$$
\end{remark}
\begin{proof}
i) 
  Let us write
  $$G(r)=\int_{B(0,r)}\delta(z)\,dz=\int_0^r W(\rho) \rho\, d\rho,$$
  where
  $$W(\rho)=\int_{0}^{2\pi}\delta(\rho e^{i\theta})\,d\theta.$$
  The function $W$ is non-decreasing on $[0,1]$ (see \cite{Levin}).
  Indeed, define
  $\widetilde{W}:~\mathbf{D}~\rightarrow~\mathbf{R}$ by
  $$\widetilde{W}(z)=\int_{0}^{2\pi}\delta(ze^{i\theta})\,d\theta.$$
  This function is subharmonic and satisfies $\widetilde{W}(z)=W(|z|)$.
  It follows from the maximum principle that for any $z$ with $|z|=\rho$,
  $$W(\rho)=\widetilde{W}(z)\geq\max_{|z|\leq\rho}\widetilde{W}(z)
  =\max_{s\leq\rho}W(s).$$

  Therefore,
  \begin{align*}
    G(r)=r^2 \int_0^1 W(r\, \rho) \rho \, d\rho\leq r^2\int_0^1 W(\rho)\,\rho\,d\rho
    = G(1)r^2=\pi r^2.
  \end{align*}

  \noindent
  ii) If $\delta$ is harmonic, then the value of $\delta$ at
  zero is equal to the  average over any circle centered at zero:
  $$\delta(0)=\frac{1}{2\pi\rho}W(\rho).$$
  Hence,
  $G(r)=\int_{0}^1W(\rho)\rho\,d\rho=\pi\, \delta(0)\,r^2$, and 
 since  $G(1)=\pi$,  we get  $\delta(0)=1$ and $G(r)=\pi r^2$.
  
  In the opposite direction,  if $G(r)=\pi r^2$,  then $2\pi r=G'(r)=W(r)r$, so that
  the function $W$ is constant:
  $$2\pi=W(r)=\int_{0}^{2\pi}\delta(re^{i\theta})\,d\theta.$$
  Using a version of Jensen's formula
  (see~\cite[p.47]{Levin}) we get:
  \begin{gather*}
    \delta(0)+\int_{\mathbf{D}}\log\left(\frac{1}{|z|}\right)\Delta\delta(z)\,dz
    =\frac{W(r)}{2\pi}=1.
  \end{gather*}
  Since  the right--hand side is the average of $\delta$ over a circle of
  radius $r$, we get
  $$\delta(0)=\lim_{r\rightarrow 0}\frac{W(r)}{2\pi}=1.$$
  It follows that
  $$\int_{\mathbf{D}}\log\left(\frac{1}{|z|}\right)\Delta\delta(z)\,dz=0.$$
  Since  $\log\left(\frac{1}{|z|}\right)>0$ and $\Delta\,\delta\geq 0$, this  implies $\Delta\,\delta=0.$
\end{proof}
\begin{lemma}\label{comparisonLebesgue}
  Let $\delta(z)$ be a
  subharmonic function on $\mathbf{D}$, 
  $\nu=\delta(z)dz$ be the corresponding measure, and 
  $h:[0,1]\rightarrow\mathbf{R}$ be a smooth strictly increasing
  function with $h(0)=0$.  Suppose that $G(1)=\pi$, where $G(r)$ is
  given by \eqref{zhe}. Then 
  $$\int_{\mathbf{D}}h (|z|)\,d\nu\geq \int_{\mathbf{D}}h (|z|)\,dz.$$
  
\smallskip
\noindent Moreover, equality holds iff $\delta$ is harmonic.
\end{lemma}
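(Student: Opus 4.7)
The plan is to reduce the inequality to the pointwise comparison $G(r)\leq \pi r^2$ already established in Lemma~\ref{subharmonicgrowth}(i), via a layer-cake rewriting of both sides. Since $h$ is smooth with $h(0)=0$, I would write $h(|z|) = \int_0^{|z|} h'(t)\,dt = \int_0^1 h'(t)\mathbf{1}_{\{|z|>t\}}\,dt$ and apply Fubini to obtain
\begin{gather*}
\int_{\mathbf{D}} h(|z|)\,d\nu \;=\; \int_0^1 h'(t)\bigl(G(1)-G(t)\bigr)\,dt.
\end{gather*}
Applying the same identity with $\nu$ replaced by the Lebesgue measure $dz$ (whose radial distribution function is $\pi r^2$) yields
\begin{gather*}
\int_{\mathbf{D}} h(|z|)\,dz \;=\; \int_0^1 h'(t)\bigl(\pi - \pi t^2\bigr)\,dt.
\end{gather*}

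Next I would invoke Lemma~\ref{subharmonicgrowth}(i): since $\delta$ is subharmonic on $\mathbf{D}$ and $G(1)=\pi$, we have $G(t)\leq \pi t^2$ on $[0,1]$, hence $G(1)-G(t)\geq \pi-\pi t^2$. Because $h$ is smooth and strictly increasing, $h'\geq 0$ everywhere and $\{t:h'(t)>0\}$ is dense in $[0,1]$; a term-by-term comparison of the two integrals above then gives the desired inequality.

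For the equality case, if the two integrals coincide, then the nonnegative continuous function $t\mapsto h'(t)\bigl(\pi t^2-G(t)\bigr)$ has vanishing integral on $[0,1]$. Since $\pi t^2-G(t)$ is continuous and nonnegative and $h'$ is strictly positive on a dense subset of $[0,1]$, this forces $G(t)=\pi t^2$ for every $t\in[0,1]$, and Lemma~\ref{subharmonicgrowth}(ii) then gives that $\delta$ is harmonic. The converse implication is immediate, since the same lemma asserts $G(t)=\pi t^2$ for every harmonic $\delta$ with $G(1)=\pi$, making both integrals equal.

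I do not anticipate any serious obstacle here: the real content has been packaged into Lemma~\ref{subharmonicgrowth}, and what remains is a routine Fubini manipulation combined with monotonicity of the integral. The only minor point of care is the justification of the Fubini step, which is immediate from the smoothness of $h$ on the compact interval $[0,1]$ and the integrability of $\delta$ implied by $G(1)=\pi<\infty$.
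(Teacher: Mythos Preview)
Your proposal is correct and follows essentially the same route as the paper: both reduce the inequality to the comparison $G(r)\le \pi r^2$ from Lemma~\ref{subharmonicgrowth}(i) via the identity $\int_{\mathbf D}h(|z|)\,d\nu = h(1)G(1)-\int_0^1 h'(r)G(r)\,dr$, the paper obtaining it by integration by parts and you by the layer-cake/Fubini rewriting; the equality analysis is likewise identical. Your treatment of the equality case (observing that $h'>0$ on a dense set forces the continuous nonnegative function $\pi t^2-G(t)$ to vanish identically) is in fact slightly more explicit than the paper's one-line appeal to strict monotonicity.
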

\begin{proof}
  Using Lemma~\ref{subharmonicgrowth} and
  integration by parts we obtain:
  \begin{align*}
    \int_{\mathbf{D}}h(|z|)\,d\nu&=\int_{\mathbf{D}}h(|z|)\delta(z)\,dz
    =\int_{0}^1h (r)G'(r)\,dr\\
    &=h (1)G(1)-\int_{0}^1\frac{d}{dr}\bigl(h(r)\bigr)G(r)\,dr\\
    &\geq h(1)G(1)-\pi\int_{0}^1\frac{d}{dr}\bigl(h(r)\bigr)r^2\,dr\\
    &=2\pi\int_{0}^1h(r)r\,dr
    =\int_{\mathbf{D}}h(|z|)\,dz.
  \end{align*}
    
  If
  $\int_{\mathbf{D}}h(|z|)\,d\nu=\int_{\mathbf{D}}h(|z|)\,dz$,
  then from the computation above we deduce that
  $$\int_{0}^1\frac{d}{dr}\bigl(h(r)\bigr)\left(G(r)-\pi r^2\right)\,dr=0.$$
  By Lemma~\ref{subharmonicgrowth} (i), we have
  $G(r)\leq\pi r^2.$ Since $h$ is strictly increasing,
  we get  $G(r)=\pi r^2$, which implies that $\delta$ is
  harmonic by Lemma~\ref{subharmonicgrowth}~(ii).
\end{proof}
\subsection{Proof of Szeg\"o's theorem}
\label{SectionProofSzego}
The goal of this section is to prove Theorem~\ref{TheoremSzego}.
Let $\phi:\mathbf{D}\rightarrow\Omega$
be a conformal equivalence. Let $\delta(z)=|\phi'(z)|^2$. It follows
from Lemma~\ref{logharmonic} that $\log\delta$ is harmonic, and hence
$\delta$ is subharmonic. Applying  a rescaling if necessary,
we may assume without loss of generality that
$M(\Omega)=\pi.$ We will use the variational
characterization~\eqref{rayleighAbstractDisk} with the measure 
$\nu=\phi^*(dz)=\delta dz$ and with the test functions
$$X_t(z)=\langle \Psi(z),t\rangle,$$
where $\Psi(re^{i\theta})=f(r)e^{i\theta}$, with
$f(r)=\frac{J_1(\zeta r)}{J_1(\zeta)}$.
By Proposition~\ref{alaHersch}, we  may assume
\begin{gather*}
  \int_{\mathbf{D}}X_t\,d\nu=0\quad\mbox{ for all } t\in\mathbf{R}^2.
\end{gather*}

Choose  $s,t\in\mathbf{S}^1$ such that $\langle s,t\rangle=0.$
Observe that for any $z\in\mathbf{D}$,
$$X_{s}^2(z)+X_{t}^2(z)=f^2(|z|).$$
Switching  $s$ and $t$ if necessary, we may assume that
\begin{align}\label{lowerboundNeumannX_s}
  \int_{\mathbf{D}}X_t^2\,d\nu&\geq\frac{1}{2}\int_{\mathbf{D}}f^2(|z|)\,d\nu(z)
  \geq \frac{1}{2}\int_{\mathbf{D}}f^2(|z|)\,dz,
\end{align}
where the last inequality follows from
Lemma~\ref{comparisonLebesgue}, because  $f(r)$ is strictly increasing on $[0,1]$.
Recall that the functions $X_t$ are
the Neumann eigenfunctions corresponding to the eigenvalue
$\mu_1(\mathbf{D})$.  Therefore, 
\begin{gather}\label{dirichletbound}
  \int_{\mathbf{D}}|\nabla X_t|^2\,dz
  =\mu_1(\mathbf{D})\int_{\mathbf{D}}X_t^2\,dz
  =\frac{\mu_1(\mathbf{D})}{2}\int_{\mathbf{D}}f^2(|z|)\,dz
\end{gather}
The proof of inequality~\eqref{ineqSzego} now follows from ~\eqref{lowerboundNeumannX_s},~\eqref{dirichletbound} and the
variational characterization~\eqref{rayleighAbstractDisk}.
\begin{remark}  This argument is motivated by  \cite[section 2.7]{GNP} and is a  modification of the proof  given in  \cite[p. 138]{SY}.  As indicated in  \cite[Remark 2.7.12]{GNP}, the novelty of our approach is  that it uses the properties of subharmonic functions.
 \end{remark} 

\begin{remark}\label{remarkBandle} The first part of Theorem \ref{ThmNeumannDensity} is proved in a similar way. To obtain inequality \eqref{ineqBandle}, we take 
  $\nu=\phi^*(\rho dz)$. By Lemma~\ref{logharmonic} this measure is
  also of the form $\delta\, dz$, where $\Delta\log\delta\geq 0$.
  It follows from Lemma~\ref{subharmSimple} that $\delta$ is subharmonic.
  The rest of the proof is unchanged.
\end{remark}

\subsubsection*{Case of equality}
Let us show  that  the  equality in~\eqref{ineqSzego} implies that
$\Omega$ is a disk. We will give two proofs of this fact.

\begin{proof}[First proof]
  Suppose that $\mu_1(\Omega)=\mu_1(\mathbf{D})$ and $M(\Omega)=\pi.$
  For the specific choice of $t$ made in~\eqref{lowerboundNeumannX_s}
  we have
  $$\mu_1(\D)=\mu_1(\Omega)\leq\frac{\int_{\mathbf{D}}|\nabla X_t|^2\,dz}
  {\int_{\mathbf{D}}X^2_t\,d\nu}\leq \mu_1(\mathbf{D})$$
  It follows that the function $X_t$ is a first  eigenfunction  of
  problem~\eqref{neumannDisk}:
  \begin{equation}
    -\Delta X_t =\delta\mu_1(\mathbf{D}) X_t \mbox{ in }\mathbf{D}.
  \end{equation}
  Because $-\Delta X_t=\mu_1(\mathbf{D}) X_t$, we deduce that
  $1=\delta=|\phi'(z)|^2$,  so that the conformal
  equivalence $\phi:\mathbf{D}\rightarrow\Omega$ is an isometry.
\end{proof}

Our  second proof is  a bit more involved, but it can be adapted to  the case of 
$\mu_2$: in section \ref{subsecmu2} we use a similar idea to prove that 
inequality \eqref{ourineqneum} is strict.
\begin{proof}[Second proof]
  Suppose that $\mu_1(\Omega)=\mu_1(\mathbf{D})$ and  $M(\Omega)=\pi.$
  For each $t \in \mathbf{S}^1$ we have
  \begin{align*}
    \mu_1(\mathbf{D})
    &\leq\frac{\int_{\mathbf{D}}|\nabla X_t|^2\,dz}
    {\int_{\mathbf{D}}X^2_t\,d\nu}
    =\mu_1(\mathbf{D})\frac{\int_{\mathbf{D}}X_t^2\,dz}
    {\int_{\mathbf{D}}X^2_t\,d\nu}.
  \end{align*}
  It follows that for each $t$
  \begin{gather*}
    \int_{\mathbf{D}}X^2_t\,d\nu\leq\int_{\mathbf{D}}X^2_t\,dz
  \end{gather*}
  Let $s,t\in\mathbf{S}^1$ be such that $\langle s,t\rangle=0$. It
  follows from $X_t^2(z)+X_s^2(z)=f^2(|z|)$ and the above inequality
  that
  \begin{gather*}
    \int_{\mathbf{D}}f^2(|z|)\,d\nu=\int_{\D}(X_t^2+X_s^2)\,d\nu
    \leq\int_{\D}(X_t^2+X_s^2)\,dz
    = \int_{\mathbf{D}}f^2(|z|)\,dz.
  \end{gather*}
  From Lemma~\ref{comparisonLebesgue} we get 
  $\int_{\mathbf{D}}f^2(|z|)\,d\nu=\int_{\mathbf{D}}f^2(|z|)\,dz$ and
  $\Delta\delta=0.$
  By construction of $\delta$ (see Lemma~\ref{logharmonic}) we have
  $\Delta\log\delta=0$,  so that by Lemma~\ref{laplogdelta} $\delta$
  is a constant. Therefore, since $M(\Omega)=\pi=M(\mathbf{D})$,  we have 
  $\delta(z)=|\phi'(z)|^2=1$, and hence
  $\phi:\mathbf{D}\rightarrow\Omega$ is an isometry.
\end{proof}

\section{Shape optimization for $\mu_2$ and $\sigma_2$}
\label{SectionShapeTwo}
\subsection{Hersch's method revisited}
\label{sectionHerschTwo}
In order to apply the Hersch method
to the second nonzero Neumann and Steklov  eigenvalues, more control is needed on the point $\xi$
obtained in  Proposition~\ref{alaHersch}.

\begin{proposition}
  Let $\nu$ be a finite measure on the closed disk
  $\overline{\mathbf{D}}$ satisfying~(\ref{conditionNiceMeasure}).
  The renormalizing point $\xi$ is unique and depends continuously
  on $\nu$.
\end{proposition}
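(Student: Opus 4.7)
The plan is to establish uniqueness via a Jacobian computation, then derive continuous dependence from a standard weak-compactness argument built on top of uniqueness.

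For uniqueness, since $\Gamma(\xi) = \frac{1}{M(\nu)}\int_{\overline{\mathbf{D}}} \Psi(d_\xi(z))\,d\nu(z)$, I would compute $D\Gamma(\xi)$ using Wirtinger derivatives and the explicit formula for $d_\xi$. At $\xi = 0$ one has $\partial_\xi d_\xi|_{\xi=0} = 1$ and $\partial_{\bar\xi} d_\xi|_{\xi=0} = -z^2$, which in the Steklov case ($\Psi = \id$) yields
\[
\det D\Gamma(0) \;=\; 1 - \left|\tfrac{1}{M(\nu)} \int_{\overline{\mathbf{D}}} z^2\,d\nu\right|^2.
\]
This is strictly positive: by Cauchy--Schwarz, equality would force $\nu$ to be concentrated on at most two points of $\partial\mathbf{D}$, which is incompatible with the hypothesis that $\nu$ is absolutely continuous with respect to Lebesgue measure in $\mathbf{D}$ and to arc-length measure on $\partial\mathbf{D}$ (the condition that underlies~\eqref{conditionNiceMeasure}). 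For arbitrary $\xi_0 \in \mathbf{D}$, precomposing with $d_{-\xi_0}$ reduces the computation at $\xi_0$ to one at the origin, with $\nu$ replaced by $(d_{\xi_0})_*\nu$; the same Cauchy--Schwarz argument gives $\det D\Gamma(\xi_0) > 0$. The Neumann case, with $\Psi(re^{i\theta}) = f(r)e^{i\theta}$, is analogous but introduces derivatives of $f$ into the formula. Once $\Gamma: \overline{\mathbf{D}} \to \overline{\mathbf{D}}$ is known to be continuous, to restrict to the identity on $\partial\mathbf{D}$, and to have strictly positive Jacobian throughout $\mathbf{D}$, standard degree theory (openness plus boundary-fixing plus degree one) forces $\Gamma$ to be a global homeomorphism, so $\Gamma^{-1}(0)$ is a single point.

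For continuous dependence, let $\nu_n \to \nu$ weakly, with unique renormalizing points $\xi_n$ and $\xi$. The map $\nu \mapsto \Gamma_\nu$ is continuous uniformly on compact subsets of $\mathbf{D}$, and condition~\eqref{conditionNiceMeasure}, applied uniformly in $n$, prevents $(\xi_n)$ from escaping to $\partial\mathbf{D}$: if $\xi_{n_k} \to p \in \partial\mathbf{D}$, then $\Gamma_{\nu_{n_k}}(\xi_{n_k}) \to \Psi(p) = p \neq 0$, contradicting $\Gamma_{\nu_{n_k}}(\xi_{n_k}) = 0$. Any interior accumulation point $\xi_\infty$ of $(\xi_n)$ thus satisfies $\Gamma_\nu(\xi_\infty) = 0$, hence equals $\xi$ by the uniqueness already proved. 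Therefore $\xi_n \to \xi$.

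The main obstacle is the Jacobian computation in the Neumann case, where $\Psi$ is nonlinear and the reduction to the origin introduces an additional factor from $D\Psi$. Verifying strict positivity of $\det D\Gamma(\xi_0)$ for every $\xi_0 \in \mathbf{D}$ requires combining the conformal contribution from $d_{\xi_0}$ with the radial nonlinearity of $\Psi$, and carefully ruling out the configurations in which Cauchy--Schwarz degenerates. Once this positivity is established, the rest of the argument is formal.
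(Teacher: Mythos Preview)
Your continuity argument is essentially the paper's (in fact you are more careful in explicitly ruling out boundary accumulation of the $\xi_n$). The uniqueness argument, however, has a real gap. The reduction of $\det D\Gamma(\xi_0)$ to a Jacobian at the origin for the pushed measure $\mu=(d_{\xi_0})_*\nu$ is only valid when $\xi_0$ is already a zero of $\Gamma$. The composition law is $d_\eta=c(\eta)\,d_{\alpha(\eta)}\circ d_{\xi_0}$ with $\alpha(\eta)=d_{-\xi_0}(\eta)$ and $|c(\eta)|=1$, giving $\Gamma_\nu(\eta)=c(\eta)\,\Gamma_\mu(\alpha(\eta))$; since the unimodular factor $c$ depends on $\eta$, differentiating at $\eta=\xi_0$ produces the extra term $Dc(\xi_0)\cdot\Gamma_\mu(0)$, which vanishes only if $\Gamma_\mu(0)=0$. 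And $\det D\Gamma$ is genuinely \emph{not} positive everywhere: take $\mu$ a smooth approximation of $\tfrac12(\delta_1+\delta_i)$ and $\xi_0=0.8\,e^{i\pi/4}$, and one computes $|1-\bar\xi_0\, C(\mu)|<|\tfrac1{M(\mu)}\!\int w^2\,d\mu-\xi_0\, C(\mu)|$, hence $\det D\Gamma_\nu(\xi_0)<0$ for $\nu=(d_{-\xi_0})_*\mu$. So the global-homeomorphism step fails. The repair is straightforward: positivity of $\det D\Gamma$ \emph{at every zero} makes $0$ a regular value with each preimage of local degree $+1$, and since $\Gamma|_{\mathbf{S}^1}=\id$ has degree one, there is exactly one zero.

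The paper's uniqueness argument (also given only for $\Psi=\id$) is more elementary and avoids Jacobians entirely: if $C(\nu)=0$ and $\xi\ne 0$, one checks directly that $\langle d_\xi(z),\xi/|\xi|\rangle\ge\langle z,\xi/|\xi|\rangle$ with strict inequality off a $\nu$-null set, so $\langle C((d_\xi)_*\nu),\,\xi/|\xi|\rangle>0$ and the pushed-forward measure is not centered; the general case then reduces to this one via the same composition identity. Your degree-theoretic approach, once patched, has the merit of providing a template that might adapt to the Neumann $\Psi$---but, as you acknowledge, verifying Jacobian positivity at the zeros in that nonlinear case remains to be done.
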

\begin{proof}
  We give the proof for $\Psi(z)=z$ only. For more details on the
  general case, see~\cite{GNP}.
  First, suppose that $\mbox{C}(\nu)=0$ and let $\xi\neq 0$.
  Let $s=\frac{\xi}{|\xi|}.$ An easy computation shows that
  $\langle d_\xi(z), s\rangle > \langle z,s\rangle.$
  It follows that
  \begin{align*}
    \left\langle\mbox{C}\bigl((d_\xi)_*\nu\bigr),s\right\rangle
    =\frac{1}{M(\nu)}\int_{\overline{\mathbf{D}}}\langle d_\xi(z), s\rangle\,d\nu
    >\frac{1}{M(\nu)}\int_{\overline{\mathbf{D}}}\langle z, s\rangle\,d\nu
    =\left\langle\mbox{C}\bigl(\nu\bigr),s\right\rangle.
  \end{align*}
  In other words, if the center of mass of the measure $\nu$ is the origin,
  then $\xi=0$.

  Now, let $\nu$ be an arbitrary finite measure and suppose that it is
  renormalized by $d_\xi$ and $d_\eta$.
  By explicit computation  one gets
  $$d_{\eta}\circ d_{-\xi}=\frac{1-\eta \bar \xi}{1-\bar \eta \xi} d_\alpha,$$ where
  $\alpha=d_{-\xi}(\eta)$ and
  $\left|\frac{1-\eta \bar \xi}{1-\bar \eta\xi}\right|=1$.
  Moreover,
  \begin{align*}
    (d_\eta)_*\nu&=(d_\eta\circ d_{-\xi})_*\left(d_\xi\right)_*\nu
    =\frac{1-\eta \bar \xi}{1-\bar \eta \xi} (d_\alpha)_*\left(d_\xi\right)_*\nu.
  \end{align*}
  This implies that $d_\alpha$ renormalizes the measure
  $\left(d_\xi\right)_*\nu$ whose center of mass is already at
  the origin. It follows from the previous case that $\alpha=0$,
  which in turn implies $\eta=\xi.$

  Let us prove continuity.
  Let $(\nu_k)$ be a sequence of measures converging to the measure
  $\nu$. Without loss of generality suppose that $\nu$ is
  renormalized. Let $\xi_k\in\mathbf{D}\subset\overline{\mathbf{D}}$
  be the unique element such that $d_{\xi_k}$ renormalizes $\nu_k$.
  Let $(\xi_{k_j})$ be a convergent subsequence, say to
  $\xi\in\overline{\mathbf{D}}$.
  Now, by definition of $\xi_k$ there holds
  \begin{align*}
    0=\lim_{j\rightarrow\infty}\left|\int_{\overline{\mathbf{D}}}z\,\,(d_{\xi_{k_j}})_*d\nu_{k_j}\right|=
    \left|\int_{\overline{\mathbf{D}}}z\,\,(d_{\xi})_*d\nu\right|,
  \end{align*}
  and hence $d_\xi$ renormalizes $\nu$. Since we assumed that $\nu$
  is normalized, by uniqueness we get $\xi=0$. Therefore, $0$ is the
  unique accumulation point of the sequence $(\xi_k)$ in $\overline{\mathbf{D}}$
  and hence by compactness we get $\xi_k \to 0$.
\end{proof}
\subsection{Hyperbolic caps}
\label{hcaps}
Let $\gamma$ be a geodesic in the Poincar\'e disk
model, that is a diameter or the intersection of the disk with a
circle, which is orthogonal to $\mathbf{S}^1$.
\begin{figure}[h]
  \centering
  \psfrag{a}[][][1]{$a_{l,p}$}
  \psfrag{p}[][][1]{$p$}
  \psfrag{l}[][][1]{$l$}
  \includegraphics[width=7cm]{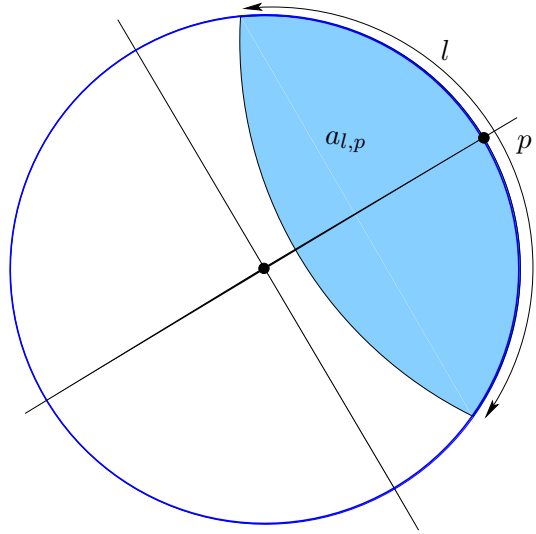}
  \caption{The hyperbolic cap $a_{l,p}$}
  \label{fighypocapo}
\end{figure}
Each connected component of $\mathbf{D}\setminus\gamma$
is called a \emph{hyperbolic cap}~\cite{GNP}.
Given $p\in\mathbf{S}^1$ and $l\in (0,2\pi)$, let $a_{l,p}$ be
the hyperbolic cap such that the circular segment $\partial
a_{l,p}\cap\mathbf{S}^1$ has length $l$ and is centered at $p$
(see Figure~\ref{fighypocapo}). This gives
an identification of the space  $\mathcal{HC}$ of all hyperbolic
caps with the cylinder $(0,2\pi)\times\mathbf{S}^1$.
Given a cap $a \in {\mathcal HC}$, let
$\tau_a:\overline{\mathbf{D}}\rightarrow\overline{\mathbf{D}}$ be
the reflection across the hyperbolic geodesic bounding $a$. That is,
$\tau_a$ is the unique non-trivial conformal involution of
$\mathbf{D}$ leaving every point of the geodesic
$\partial a\cap\mathbf{D}$ fixed. In particular the cap adjacent to $a$
is $a^*=\tau_a(a).$

\subsection{Folded  measure}
\label{SectionFolding}
The {\it lift} of a function
$u:\overline{a}\rightarrow\mathbf{R}$ is the function
$\tilde{u}:\overline{\mathbf{D}}\rightarrow\mathbf{R}$ defined by
\begin{equation}\label{lift}
  \tilde{u}(z)=
  \begin{cases}
    u(z) & \mbox{if } z\in\overline{a},\\
    u(\tau_az) & \mbox{if } z\in \overline{a^*}.
  \end{cases}
\end{equation}


As before, let $\nu$ be a finite measure on the closed disk $\overline{\mathbf{D}}$ which 
is absolutely continuous with respect to the Lebesgue measure on $\mathbf{D}$ and with respect to the arc--length measure on 
$\partial \mathbf{D}$. 
Observe that
\begin{align}\label{folding}
  \int_{\overline{\mathbf{D}}}\tilde{u}\,d\nu
  &=\int_{\overline{a}}u\,d\nu+\int_{\overline{a^*}}u\circ\tau_a\,d\nu\nonumber\\
  &=\int_{\overline{a}}u\,(d\nu+\tau_a^*d\nu).
\end{align}
The measure
\begin{equation}
\label{folded}
 d\nu_a =
\begin{cases}
  d\nu+\tau_a^*d\nu& \mbox{on } \overline{a},\\
  0 & \mbox{on } \overline{a^*}
\end{cases}
\end{equation}
is called the \emph{folded measure}. Equation~(\ref{folding}) can be rewritten
as
$$\int_{\overline{\mathbf{D}}}\tilde{u}\,d\nu=\int_{\overline{\mathbf{D}}}u\,d\nu_a.$$

\subsection{Test functions}
\label{SectionTestFunctions}
Let $a\in\mathcal{HC}$ be a hyperbolic cap and let
$\phi_a:\mathbf{D}\rightarrow a$ be a conformal equivalence.
For each $t\in\mathbf{R}^2$, define
$u_a^t:\overline{a}\rightarrow\mathbf{R}$ by
\begin{equation}\label{uat}
  u_a^t(z)=X_t\circ\phi_a^{-1}(z).
\end{equation}
For each cap $a\in\mathcal{HC}$ we will use the two-dimensional space of test functions
$$E_a=\left\{\tilde{u}_a^t\ :\ t\in\mathbf{R}^2\right\}$$
in the variational characterization~\eqref{rayleighAbstractDisk}.
It follows from the conformal invariance of
the Dirichlet energy that 
\begin{align}\label{doubleDirichlet}
  \int_{\mathbf{D}}|\nabla \tilde{u}_a^t|^2\,dz
  &=\int_{a}|\nabla u_a^t|^2\,dz
  +\int_{a^*}|\nabla(u_a^t\circ\tau_a)|^2\,dz\nonumber\\
  &=2\int_{a}|\nabla u_a^t|^2 \,dz
  =2\int_{\mathbf{D}}|\nabla X_t|^2 \,dz.
\end{align}
Observe that the denominator in~\eqref{rayleighAbstractDisk} can be
rewritten as 
\begin{gather}\label{intertia}
  \int_{\overline{\mathbf{D}}}(\tilde{u}_a^t)^2\,d\nu
  =\int_{\overline{\mathbf{D}}}X_t^2\,d\,\phi_a^*\,\nu_a.
\end{gather}
We call $\zeta_a=\phi_a^*\nu_a$ the \emph{rearranged measure}.
Taking~\eqref{doubleDirichlet} and~\eqref{intertia} into
account, we obtain from the variational characterization~\eqref{rayleighAbstractDisk} that
\begin{gather}\label{FoldedRayleigh}
\lambda_2(\nu)\leq 2\sup_{t\in\mathbf{S}^1}
  \frac{\int_{\overline{\mathbf{D}}}|\nabla X_t|^2\,dz}
  {\int_{\overline{\mathbf{D}}} X_t^2\,d\zeta_a}
\end{gather}
provided that the functions $\tilde{u}_a^t$ satisfy the admissibility
condition~\eqref{admissibility}.  This condition  can be rewritten in terms
of the rearranged measure $\zeta_a$:
\begin{gather*}
  \int_{\overline{\mathbf{D}}}\tilde{u}_a^t\,d\nu=\int_{\overline{\mathbf{D}}}X_t\,d\zeta_a = \,0.
\end{gather*}
In other words, ~\eqref{admissibility} is
satisfied by the function $\tilde{u}_a^t$ iff the rearranged measure
$\zeta_a$ is renormalized.

Note  that we are free to choose  the  conformal equivalences
$\phi_a:\mathbf{D}\rightarrow a$ in our construction of test functions.
\begin{lemma}\label{flipfloplemma}
  There exists a family of conformal equivalences
  $\left\{\phi_a:\mathbf{D}\rightarrow a\right\}_{a\in\mathcal{HC}}$
  such that the rearranged measure $\zeta_a$ depends continuously
  on the cap $a\in\mathcal{HC}$ and satisfies
  \begin{gather}
    \int_{\overline{\mathbf{D}}}X_t\,d\zeta_a=0,\label{normalization}\\
    \lim_{a\rightarrow \mathbf{D}}\zeta_a=\nu,\label{fulldisk}\\
    \lim_{a\rightarrow p}\zeta_a=R_p^*d\nu\label{flipflopProperty},
  \end{gather}
  where $p \in\mathbf{S}^1$ and
  $R_p(x)=x-2\langle x, p\rangle$ is the reflection with
  respect to the diameter orthogonal to the vector $p$.
\end{lemma}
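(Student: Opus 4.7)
The plan is to construct $\phi_a$ in two stages. First, select a continuous preliminary family of conformal equivalences $\tilde\phi_a : \mathbf{D} \to a$ with specific asymptotic behavior at both strata $\{l=0\}$ and $\{l=2\pi\}$ of $\partial\mathcal{HC}$. Then, for each $a$, the preceding proposition supplies a unique $\xi_a \in \mathbf{D}$, depending continuously on $a$, such that $\zeta_a := (d_{\xi_a})_* \tilde\phi_a^* \nu_a$ is renormalized. Setting $\phi_a = \tilde\phi_a \circ d_{-\xi_a}$, we get $\zeta_a = \phi_a^* \nu_a$ and (\ref{normalization}) holds by construction.

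To choose $\tilde\phi_a$, I would use the Riemann mapping theorem under two normalizing conditions: first, the \emph{inversion condition} $\tilde\phi_a(0) = \tau_a(0)$, which automatically yields $\tau_a(\tilde\phi_a(0)) = 0$; second, the argument of $\tilde\phi_a'(0)$ is pinned so that the anti-holomorphic differential of $\tau_a \circ \tilde\phi_a$ at $0$ matches that of $R_p$. Working in the model $a = a_{l, 1}$ with explicit formulas (e.g., $\tau_a(0) = \cos(l/2)$), one verifies continuity of $\tilde\phi_a$ on $\mathcal{HC}$ and two key asymptotics: \emph{as $l \to 0$}, $\tilde\phi_a$ collapses to the constant map $z \mapsto p$, while $\tau_a \circ \tilde\phi_a$ converges, uniformly on compact subsets of $\mathbf{D}$, to the Euclidean reflection $R_p$; \emph{as $l \to 2\pi$}, one has $\tau_a(0) \to -p$ and $\tilde\phi_a$ approaches the Möbius automorphism $d_{-p}$ (which sends $0$ to $-p$).

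Combined with the definition of the folded measure, these asymptotics determine the limits of $\tilde\zeta_a := \tilde\phi_a^* \nu_a = \tilde\phi_a^* d\nu + (\tau_a \circ \tilde\phi_a)^* d\nu$, whose two summands have total masses $\nu(\overline{a})$ and $\nu(\overline{a^*})$. As $a \to p$, the first summand vanishes and the second tends to $R_p^* d\nu$; as $a \to \mathbf{D}$, the second vanishes and the first tends to $d_{-p}^* \nu$. Arrange without loss of generality (by Proposition~\ref{alaHersch} applied to $\nu$ itself) that $\nu$ is renormalized. Then $R_p^* d\nu$ is also renormalized: since $\Psi$ commutes with $R_p$ both for the Steklov choice $\Psi = \id$ and the Neumann choice $\Psi(re^{i\theta}) = f(r)e^{i\theta}$ ($R_p$ preserves radii and reflects angles), one computes $\int X_t \, d(R_p^* d\nu) = \int X_{R_p t} \, d\nu = 0$. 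By uniqueness of the Hersch renormalization point, $\xi_a \to 0$ as $a \to p$, so $\zeta_a \to R_p^* d\nu$, giving (\ref{flipflopProperty}). In the full-disk limit, the unique $\xi$ renormalizing $d_{-p}^* \nu$ is computed from the condition $(d_{\xi} \circ d_p)(0) = 0$ (so that $d_{\xi} \circ d_p$ is a rotation), yielding $\xi_a \to -p \in \partial\mathbf{D}$; correspondingly $\phi_a = \tilde\phi_a \circ d_{-\xi_a} \to d_{-p} \circ d_p = \id$, and $\zeta_a \to \nu$, giving (\ref{fulldisk}).

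The main technical obstacle is Step~1: choosing the normalization of $\tilde\phi_a$ so that the anti-holomorphic composition $\tau_a \circ \tilde\phi_a$ degenerates to \emph{precisely} the Euclidean reflection $R_p$ (and not some other anti-Möbius conjugate) in the collapsing limit, while simultaneously producing the right limiting Möbius transformation $d_{-p}$ in the full-disk limit. This is what feeds the specific formulas (\ref{fulldisk}) and (\ref{flipflopProperty}), and it determines the asymptotic behavior of the Hersch adjustment $\xi_a$ at each boundary stratum. Once this is in place, the remaining ingredients — weak continuity of measure pullbacks under continuous families of maps, the continuous dependence of the Hersch renormalization on the interior of $\mathcal{HC}$, and the uniqueness of the renormalization point at the boundary limits — are standard.
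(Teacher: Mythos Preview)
Your two-stage plan---choose a preliminary family $\tilde\phi_a$, then post-compose with the Hersch automorphism $d_{-\xi_a}$---matches the paper's outline, and your treatment of the collapsing limit $a\to p$ is essentially sound: the condition $\tau_a(\tilde\phi_a(0))=0$ together with the derivative normalization forces $\tau_a\circ\tilde\phi_a\to R_p$, and since $R_p^*\nu$ is already renormalized (your computation $\int X_t\,d(R_p^*\nu)=\int X_{R_pt}\,d\nu=0$ is correct), continuity of the Hersch point gives $\xi_a\to 0$ and hence \eqref{flipflopProperty}.

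The gap is at the full-disk stratum. Your inversion condition $\tilde\phi_a(0)=\tau_a(0)$ is well-posed only for $l\le\pi$: once $l>\pi$ the origin lies in $a$, so $\tau_a(0)\in a^*$, and no conformal equivalence $\mathbf{D}\to a$ can send $0$ there. (In your own model $a=a_{l,1}$ you have $\tau_a(0)=\cos(l/2)$, which for $l>\pi$ is a negative real number lying in the small adjacent cap $a^*$.) So the preliminary family is simply undefined on half of $\mathcal{HC}$.

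Even if one ignores this, the limit analysis as $l\to 2\pi$ is based on a mistake: $d_{-p}$ for $p\in\mathbf{S}^1$ is \emph{not} a M\"obius automorphism but the constant map $z\mapsto -p$ (since $-\bar p z+1=-\bar p(z-p)$). Thus $\tilde\phi_a$ degenerates, $\tilde\phi_a^*\nu$ concentrates to a Dirac mass rather than a pulled-back copy of $\nu$, and the Hersch point $\xi_a$ runs to $\partial\mathbf{D}$. Your conclusion $\phi_a=\tilde\phi_a\circ d_{-\xi_a}\to\id$ cannot be read off from ``$d_{-p}\circ d_p=\id$'', because both factors are constant maps and the composition is not even defined. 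This is exactly why the paper normalizes the preliminary family the other way round: it fixes $\psi_a\to\id$ as $a\to\mathbf{D}$, so that \eqref{fulldisk} is immediate and the Hersch correction stays inside $\mathbf{D}$, leaving only the $a\to p$ limit to be handled separately (via \cite[Lemma~4.3.2]{GNP}). Your construction can be repaired by switching to such a normalization for $l\ge\pi$ (note that at $l=\pi$ one has $\tau_a(0)=0$, so the two normalizations match there), but as written the full-disk limit does not go through.
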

From now on, we fix the family of conformal maps $\phi_a$ defined in
Lemma~\ref{flipfloplemma}.
\begin{proof}[Proof of Lemma~\ref{flipfloplemma}]
    Let us give an outline of the proof, for more details, see~\cite[Section 2.5]{GNP}.
    Start with any continuous family of conformal maps
    $\left\{\psi_a:\mathbf{D}\rightarrow a\right\}_{a\in\mathcal{HC}}$,
    such that $\lim_{a\rightarrow\mathbf{D}}\psi_a=\id$.
    The maps $\phi_a$ are defined by composing the $\psi_a$'s on both
    sides with automorphisms of the disk
    appearing in the Hersch renormalization procedure.
    In particular,~\eqref{normalization} is automatically satisfied.
    As the cap $a$ converges to the full disk $\mathbf{D}$, the
    conformal equivalences $\phi_a$ converge
    to the identity map on $\mathbf{D}$, which implies~\eqref{fulldisk}.
    Finally, setting $n=1$ in~\cite[Lemma~4.3.2]{GNP} one gets \eqref{flipflopProperty}.
\end{proof}

\subsection{Maximization of the moment of inertia}
\label{SectionInertia}
The \emph{moment of inertia} of a finite measure $\nu$ on the closed
disk $\overline{\D}$ is
the quadratic form $V_\nu:\mathbf{R}^2\rightarrow\mathbf{R}$ defined
by
$$V_\nu(t)=\int_{\overline{\mathbf{D}}}X_t^2\,d\nu,$$
where $X_t$ is defined by \eqref{Xs}.
When $\Psi=\id$ and $t\in\mathbf{S}^1$ this corresponds to the usual
definition given in mechanics for the moment of inertia of $\nu$ with
respect to the axis orthogonal to $t$.

Let $\mathbf{R}P^1=\mathbf{S}^1/\mathbf{Z}_2$ be the projective line. We
denote by $[t]\in \mathbf{R}P^1$ the element of the projective line
corresponding to the pair of points $\pm t \in\mathbf{S}^1$. We say that
$[t] \in \mathbf{R}P^1$ is a {\it maximizing direction} for the measure $\nu$
if $V_\nu(t)\geq V_\nu(s)$ for any $s \in \mathbf{S}^1$. The measure $\nu$
is called {\it simple} if there is a unique maximizing
direction. Otherwise, it is said to be {\it multiple}.

\begin{lemma}
  A measure $\nu$ is
  multiple if and only if $V_\nu(t)$ does not depend on
  $t \in\mathbf{S}^1$.
\end{lemma}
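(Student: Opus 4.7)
The plan is to rewrite $V_\nu$ as a quadratic form on $\mathbf{R}^2$ and then use elementary linear algebra in dimension two. Since $X_t(z)=\langle \Psi(z),t\rangle$, we have $X_t^2(z)=\langle \Psi(z)\Psi(z)^\top t,\, t\rangle$, so
\begin{gather*}
V_\nu(t)=\langle A_\nu t,t\rangle, \qquad A_\nu=\int_{\overline{\mathbf{D}}}\Psi(z)\Psi(z)^\top\,d\nu(z),
\end{gather*}
where $A_\nu$ is a symmetric positive semidefinite $2\times 2$ matrix.

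Once this is in place, I would diagonalize: let $\lambda_1\leq\lambda_2$ denote the eigenvalues of $A_\nu$. The restriction of $V_\nu$ to $\mathbf{S}^1$ is the Rayleigh quotient of $A_\nu$, and its maximum on $\mathbf{S}^1$ equals $\lambda_2$, attained exactly at the unit eigenvectors of $\lambda_2$. If $\lambda_1<\lambda_2$, this eigenspace is one-dimensional and the set of maximizers is a single antipodal pair $\{\pm t_0\}$, which corresponds to a unique element $[t_0]\in\mathbf{R}P^1$; hence $\nu$ is simple. Conversely, if $\nu$ is multiple, there are at least two distinct maximizing directions $[t],[s]\in \mathbf{R}P^1$, so the $\lambda_2$-eigenspace of $A_\nu$ contains two linearly independent vectors and therefore coincides with $\mathbf{R}^2$; thus $A_\nu=\lambda_2\,I$, and for every $t\in\mathbf{S}^1$,
\begin{gather*}
V_\nu(t)=\lambda_2\,|t|^2=\lambda_2,
\end{gather*}
so $V_\nu$ is constant on $\mathbf{S}^1$. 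The reverse direction is immediate: if $V_\nu$ is constant on $\mathbf{S}^1$, then every $[t]\in\mathbf{R}P^1$ is maximizing, so $\nu$ is multiple.

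I do not expect any real obstacle here: the entire statement is a two-dimensional spectral fact about a symmetric positive semidefinite matrix, and the only small point to be careful about is that ``multiple'' as defined in the paper allows for two or more maximizing directions, which in the $2\times 2$ setting forces proportionality to the identity and hence that \emph{all} directions are maximizing.
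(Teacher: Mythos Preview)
Your proof is correct and is precisely an explicit unpacking of the paper's one-line argument, which simply records that $V_\nu$ is a quadratic form on $\mathbf{R}^2$ and refers to \cite[Lemma~2.6.1]{GNP}; your diagonalization of the associated symmetric matrix $A_\nu$ is exactly how that quadratic-form observation yields the claim.
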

\begin{proof}
  This follows from the fact that $V_\nu$ is quadratic, see \cite[Lemma 2.6.1]{GNP}.
\end{proof}

\begin{proposition}\label{multexist}
  If the measure $\nu$ is simple, then there exists
  a cap $a\in \mathcal{HC}$ such that the rearranged measure $\zeta_a$
  is multiple.
\end{proposition}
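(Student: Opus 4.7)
My plan is to argue by contradiction using a topological obstruction. Suppose that $\zeta_a$ is simple for every cap $a \in \mathcal{HC}$, so that each $\zeta_a$ has a unique maximizing direction $[t(a)] \in \mathbf{R}P^1$. I claim this assignment defines a continuous map $F : \mathcal{HC} \to \mathbf{R}P^1$. Continuity of $a \mapsto \zeta_a$ is provided by Lemma~\ref{flipfloplemma}, hence the quadratic form $V_{\zeta_a}$ depends continuously on $a$, and the unique maximizing direction of a quadratic form on $\mathbf{R}P^1$ depends continuously on the form wherever that maximum is strict (an isolated strict maximum cannot jump under small perturbations).

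Next I compactify the parameter space. The identification $\mathcal{HC} \cong (0, 2\pi) \times \mathbf{S}^1$ admits a natural compactification $\overline{\mathcal{HC}}$ homeomorphic to the closed $2$-disk, obtained by collapsing the end $l = 2\pi$ (cap fills $\mathbf{D}$) to a single point and adjoining the circle indexed by $p \in \mathbf{S}^1$ at the end $l = 0$ (cap shrinks to $p$) as the boundary. Property~\eqref{fulldisk} gives $\zeta_a \to \nu$ at the collapsed point, so $F$ extends there to the maximizing direction $[t_0]$ of $\nu$. Property~\eqref{flipflopProperty} gives $\zeta_{a_{l,p}} \to R_p^{*}\nu$ on the boundary. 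Since $\Psi$ commutes with the orthogonal reflection $R_p$ in both the Steklov ($\Psi = \id$) and Neumann ($\Psi(re^{i\theta}) = f(r)e^{i\theta}$) cases, the change of variables $V_{R_p^{*}\nu}(t) = V_\nu(R_p t)$ shows that $R_p^{*}\nu$ is again simple with maximizing direction $[R_p t_0]$. All boundary measures being simple, $F$ extends continuously to $\overline{F} : \overline{\mathcal{HC}} \to \mathbf{R}P^1$, whose boundary restriction is the loop $p \mapsto [R_p t_0]$.

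The contradiction comes from the degree of this boundary loop. Writing $p = e^{i\varphi}$ and $t_0 = e^{i\alpha_0}$, a direct computation using complex coordinates gives $R_p t_0 = e^{i(2\varphi - \alpha_0 + \pi)}$; as $\varphi$ traverses $[0, 2\pi)$ the angle sweeps through $4\pi$, so $p \mapsto [R_p t_0]$ represents a nonzero element of $\pi_1(\mathbf{R}P^1) = \mathbf{Z}$ and is not null-homotopic. But the boundary restriction of any continuous map from a $2$-disk into $\mathbf{R}P^1$ \emph{must} be null-homotopic, yielding the desired contradiction. The main obstacle is the boundary extension of $F$ -- both the weak convergence~\eqref{flipflopProperty} of the rearranged measures and the preservation of simplicity under the reflection $R_p$ need to be handled carefully; once $\overline{F}$ is shown to be continuous on $\overline{\mathcal{HC}}$, the topological obstruction is immediate.
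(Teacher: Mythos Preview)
Your argument is correct and is essentially the paper's own proof: argue by contradiction, send each cap to its unique maximizing direction in $\mathbf{R}P^1$, use Lemma~\ref{flipfloplemma} (together with the observation $X_t\circ R_p = X_{R_pt}$, which is your ``$\Psi$ commutes with $R_p$'') to identify the limits at the two ends of the cylinder, and derive a topological contradiction. The only cosmetic difference is that the paper keeps the closed cylinder $[0,2\pi]\times\mathbf{S}^1$ and reads the contradiction as ``a homotopy between the constant loop $[t_0]$ and the non-contractible loop $\theta\mapsto[e^{2i\theta}]$,'' whereas you collapse the $l=2\pi$ end to a point and read it as ``a map from the disk whose boundary loop $p\mapsto[R_pt_0]$ is not null-homotopic''; these formulations are equivalent.
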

Proposition \ref{multexist} is proved  by contradiction. Assume that the measure $\nu$, as
well as the rearranged measures $\zeta_a$ for all $a\in {\mathcal HC}$, are simple. Given a hyperbolic cap
$a$,  let $m(a)\in\mathbf{R}P^1$ be the unique
maximizing direction for $\zeta_a$.
Let us understand the behavior of the maximizing direction as the cap
$a$ degenerates either to the full disk or to a point.
\begin{lemma}\label{degeneratecaps}
  Let  the measure $\nu$ as well as the rearranged measures
  $\zeta_a$ for all $a\in {\mathcal HC}$  be simple.
  Then
  \begin{gather}
    \lim_{a\rightarrow\mathbf{D}} m(\zeta_a)=m(\nu)\label{eqndegenerate1}\\
    \lim_{a\rightarrow e^{i\theta}} m(\zeta_a)=[e^{2i\theta}]\label{eqndegenerate2}.
  \end{gather}
\end{lemma}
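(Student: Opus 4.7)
The plan is to deduce both limits from a single continuity principle together with the measure-convergence statements \eqref{fulldisk} and \eqref{flipflopProperty} of Lemma \ref{flipfloplemma}. First I would record the following elementary fact: for each $t \in \mathbf{S}^1$ the function $z \mapsto X_t(z)^2$ is bounded and continuous on $\overline{\mathbf{D}}$, and the family $\{X_t^2 : t \in \mathbf{S}^1\}$ is uniformly bounded and equicontinuous. Hence if $\mu_n \to \mu$ weakly in the sense of measures on $\overline{\mathbf{D}}$, then $V_{\mu_n}(t) \to V_{\mu}(t)$ uniformly in $t \in \mathbf{S}^1$. If moreover $\mu$ is simple, then $V_\mu$ has a unique maximizer on $\mathbf{R}P^1$, and by compactness of $\mathbf{R}P^1$ any subsequential limit of $m(\mu_n)$ must also be a maximizer of $V_\mu$; this forces $m(\mu_n) \to m(\mu)$.

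For \eqref{eqndegenerate1} this suffices immediately: by \eqref{fulldisk}, $\zeta_a \to \nu$ weakly as $a \to \mathbf{D}$, and $\nu$ is simple by hypothesis, so the continuity principle gives $m(\zeta_a) \to m(\nu)$.

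For \eqref{eqndegenerate2} I would first identify the maximizing direction of the limit measure $R_p^*\nu$ coming from \eqref{flipflopProperty}, with $p = e^{i\theta}$. Since $\Psi$ is radially defined in both the Steklov case ($\Psi = \id$) and the Neumann case ($\Psi(re^{i\theta}) = f(r)e^{i\theta}$), it commutes with the linear reflection $R_p$, yielding the crucial identity $X_t \circ R_p = X_{R_p t}$. A change of variables then gives
\begin{equation*}
V_{R_p^*\nu}(t) = \int_{\overline{\mathbf{D}}} X_{R_p t}(z)^2 \, d\nu(z) = V_\nu(R_p t),
\end{equation*}
so $R_p^*\nu$ is simple whenever $\nu$ is (as $R_p$ is a bijection of $\mathbf{S}^1$), and its unique maximizing direction equals $[R_p(m(\nu))]$. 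Combined with the continuity principle this yields $m(\zeta_a) \to [R_p(m(\nu))]$ as $a \to e^{i\theta}$. A direct computation shows $R_{e^{i\theta}}(e^{i\psi}) = -e^{i(2\theta - \psi)}$, so once a reference orientation fixing $m(\nu) = [e^{i\psi}]$ is chosen, the limit becomes $[e^{i(2\theta - \psi)}]$; up to the fixed rotation by $\psi$ (which is the implicit normalization of the statement) this is $[e^{2i\theta}]$, and the essential geometric content -- that the limit direction traverses $\mathbf{R}P^1$ with non-trivial degree as $\theta$ traverses $\mathbf{S}^1$ -- is captured. The main obstacle is precisely the commutation of $\Psi$ with $R_p$ and the resulting transfer of simplicity across $R_p$; once these are in hand the continuity of $\mu \mapsto m(\mu)$ at simple measures does all the remaining work.
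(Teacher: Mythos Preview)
Your argument is correct and follows essentially the same route as the paper's proof: both use \eqref{fulldisk} and \eqref{flipflopProperty} together with the identity $X_t\circ R_p=X_{R_pt}$ (coming from the radial form of $\Psi$) to transfer the maximizing direction of $\nu$ through $R_p$. The only cosmetic difference is that the paper disposes of the normalization issue at the outset by assuming without loss of generality that $m(\nu)=[e_1]$, whereas you carry the parameter $\psi$ and then invoke an ``implicit normalization''; making that WLOG reduction explicit would tighten your write-up.
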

\begin{proof}
  Without loss of generality, assume $m[\nu]=[e_1]$.
  First, note that formula~\eqref{eqndegenerate1} immediately follows from
  \eqref{fulldisk}. Let us prove \eqref{eqndegenerate2}. Set $p=e^{i\theta}$.
  Formula~\eqref{flipflopProperty} implies
  \begin{equation}\label{Xps}
    \lim_{a\rightarrow p}\int_{\overline{\mathbf{D}}}X_t^2\,d\zeta_a=
    \int_{\overline{\mathbf{D}}}X_t^2\,R_p^*d\nu
    =\int_{\overline{\mathbf{D}}}X_t^2\circ R_p\,d\nu
    =\int_{\overline{\mathbf{D}}}X_{R_pt}^2\,d\nu.
  \end{equation}
  Since $\nu$ is simple, $m(\nu)=[e_1]$ is the unique maximizing direction for $\nu$
  and the right hand side of \eqref{Xps} is maximal for
  $R_pt=\pm e_1$. Applying $R_p$ on both sides we get
  $t=\pm e^{2i\theta}$.
\end{proof}
\begin{proof}[Proof of Proposition~\ref{multexist}]
  Suppose that for each hyperbolic cap $a\in\mathcal{HC}$, the rearranged
  measure $\zeta_a$ is simple. Recall that the space $\mathcal{HC}$ is
  identified with the open cylinder $(0,2\pi)\times\mathbf{S}^1$.
  Define
  $h:(0,2\pi)\times S^1\rightarrow \mathbf{R}P^1$
  by $h(l,p)=m(a_{l,p}).$ The maximizing
  direction depends continuously on the cap $a$.
  Therefore, it follows from Lemma~\ref{degeneratecaps} that $h$ extends to a continuous
  map on the closed cylinder $[0,2\pi]\times S^1$ such that
  $$h(0,e^{i\theta})=[e_1],\ h(2\pi,e^{i\theta})=[e^{2i\theta}].$$
  This means that $h$ is a homotopy between a trivial loop and a
  non-contractible loop in $\mathbf{R}P^1$. This is a contradiction.
\end{proof}

\subsection{Estimate on $\sigma_2$}
\label{sigma2}
In this section we prove Theorem~\ref{ourThmSteklov}. Consider  the functions $X_t$ introduced in~\eqref{Xs} with
$\Psi(z)=z$, that is $X_t(z)=\langle z,t\rangle.$ 
The measure $\nu=\phi^*(ds)$ is supported on $\mathbf{S}^1$.
We provide details only in the case when the measure $\nu$ is
simple. If the measure $\nu$ is multiple the proof is easier,
see~\cite{GP}.

Let $a\in\mathcal{HC}$ be a cap
such that the rearranged measure $\zeta_a$ is multiple.
Using~\eqref{FoldedRayleigh} and taking into account that the
functions $X_t$ are eigenfunctions corresponding to
$\sigma_1(\mathbf{D})=1$, we get
\begin{align}
\label{al2}
  \sigma_2(\Omega)
  \leq
  2\frac{\int_{\overline{\mathbf{D}}}|\nabla X_t|^2\,dz}
  {\int_{\mathbf{S}^1} X_t^2\,d\zeta_a}
  =
  2\frac{\int_{\mathbf{S}^1}X_t^2\,ds}
  {\int_{\mathbf{S}^1} X_t^2\,d\zeta_a}
  =\frac{2\pi}
  {\int_{\mathbf{S}^1} X_t^2\,d\zeta_a}
  \end{align}
  Given $t\in\mathbf{S}^1$, choose $s\in\mathbf{S}^1$ such that
  $\langle t,s\rangle=0.$ Multiplicity of the rearranged measure $\zeta_a$
  and $X_t^2+X_s^2=1$ on $\mathbf{S}^1$ implies
  \begin{align}
  \label{al}
    \int_{\mathbf{S}^1}X_t^2\,d\zeta_a
    =\frac{1}{2}\int_{\overline{\mathbf{D}}}(X_t^2+X_s^2)\,d\zeta_a(z)
    =\frac{1}{2}M(\Omega)
  \end{align}
  This proves that
  $\sigma_2(\Omega)\leq \frac{4\pi}{M(\Omega)}.$
  
  \subsubsection*{The inequality is strict}
  Let $w_a^t\in C^{\infty}(\mathbf{D})$ be the unique
  harmonic extension of $\tilde{u}_a^t\bigl|\bigr._{\mathbf{S}^1}$, that is
  \begin{gather}
    \label{wat}
    \begin{cases}
      \Delta w_a^t=0& \mbox{ in } \mathbf{D},\\
      w_a^t=\tilde{u}_a^t& \mbox{ on }\mathbf{S}^1.
    \end{cases}
  \end{gather}
  These functions are smooth while  the original test functions
  $u_a^t$ are not smooth along the geodesic bounding the hyperbolic
  cap $a$ (see \cite[Lemma 3.4.1]{GP}). Therefore, $w_a^t\neq\tilde{u}_a^t$ in $H^1(\mathbf{D})$.
  It is well-known that a harmonic function, such as $w_a^t$, is the
  unique minimizer of the Dirichlet energy among all functions with
  the same boundary data~(see \cite[p. 157]{JST}).  Therefore,
  \begin{align}
 \label{al1}
    \int_{\mathbf{D}}|\nabla w_a^t|^2\,dz&<
    \int_{\mathbf{D}}|\nabla\tilde{u}_a^t|^2\,dz.
  \end{align}
Let us take  the functions $w_a^t$   as test functions instead of $\tilde{u}_a^t$
in section~\ref{SectionTestFunctions}. 
Their admissibility follows from \eqref{normalization}, because
  $w_a^t=\tilde{u}_a^t$  on $\mathbf{S}^1$. For the same reason,  the denominator in the Rayleigh   quotient calculated in \eqref{al} remains unchanged.  Together with \eqref{al1}  this implies  that
  inequality~\eqref{al2} is strict.
  
\subsection{Estimate on $\mu_2$}
\label{subsecmu2}
We use the measure $\nu=\phi^*(dz)$ and the functions
$X_t(z)=\langle \Psi(z),t\rangle$,  where
$\Psi(re^{i\theta})=f(r)e^{i\theta}$, with
$f(r)=\frac{J_1(\zeta r)}{J_1(\zeta)}$.

\begin{lemma}\label{subharm}
  The rearranged measure $\zeta_a$ on $\mathbf{D}$ can
  be represented as $\zeta_a=\delta(z)dz$, where
  $\delta:\mathbf{D}\rightarrow\mathbf{R}$ is a subharmonic function.
\end{lemma}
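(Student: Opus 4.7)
The plan is to compute the density of $\zeta_a$ explicitly and to recognize it as a sum of two log-harmonic functions on $\mathbf{D}$. The key analytic fact is that for any nowhere-vanishing holomorphic function $f$ on $\mathbf{D}$ one has $\log|f|^2 = 2\,\mathrm{Re}(\log f)$, which is harmonic; thus $|f|^2$ is log-harmonic, and in particular subharmonic by Lemma~\ref{subharmSimple}(i). Since the Laplacian is linear, a sum of two such functions is itself subharmonic.

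First, I would rewrite the original measure as $\nu = |\phi'(w)|^2\,dw$, where $dw$ denotes Lebesgue measure on $\overline{\mathbf{D}}$. Since the reflection $\tau_a$ across a hyperbolic geodesic is an anti-conformal involution of $\mathbf{D}$, its real Jacobian has absolute value $|\tau_a'(w)|^2$, where $\tau_a'$ is understood as the complex derivative of the holomorphic conjugate $\overline{\tau_a}$. Consequently, on $\overline{a}$ the folded measure \eqref{folded} takes the form
\begin{equation*}
d\nu_a = \bigl[\,|\phi'(w)|^2 + |\phi'(\tau_a(w))|^2\,|\tau_a'(w)|^2\,\bigr]\,dw.
\end{equation*}

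Next, I would pull this back by $\phi_a$ using the chain rule. Setting $\psi_1 = \phi\circ\phi_a$, which is holomorphic from $\mathbf{D}$ into $\Omega$, and letting $\chi$ denote the holomorphic conjugate of the anti-holomorphic composition $\phi\circ\tau_a\circ\phi_a$, the successive factors $|\phi_a'|^2$, $|\tau_a'\circ\phi_a|^2$ and $|\phi'\circ(\,\cdot\,)|^2$ assemble into the squared moduli of the complex derivatives of $\psi_1$ and $\chi$, yielding
\begin{equation*}
\zeta_a = \phi_a^*\,d\nu_a = \bigl[\,|\psi_1'(z)|^2 + |\chi'(z)|^2\,\bigr]\,dz.
\end{equation*}
Conformality of each of $\phi$, $\phi_a$ and $\tau_a$ ensures that $\psi_1'$ and $\chi'$ are nowhere-vanishing holomorphic functions on $\mathbf{D}$, so both summands are log-harmonic, hence subharmonic; their sum $\delta = |\psi_1'|^2 + |\chi'|^2$ is therefore subharmonic, as required. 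The only delicate step is the bookkeeping for the anti-conformal map $\tau_a$, namely that $\tau_a^*$ applied to a measure $g(w)\,dw$ picks up the Jacobian factor $|\tau_a'|^2$ in the sense appropriate to anti-holomorphic maps, and that $\phi\circ\tau_a\circ\phi_a$ is anti-holomorphic rather than holomorphic. Once these conventions are fixed, everything reduces to the identity $\log|f| = \mathrm{Re}(\log f)$ for non-vanishing holomorphic $f$ and the additivity of $\Delta$.
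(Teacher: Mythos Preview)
Your proof is correct and follows essentially the same approach as the paper: both decompose $\zeta_a$ as a sum of two densities of the form $|F'|^2$ arising from the two conformal branches $\phi\circ\phi_a$ and $\phi\circ\tau_a\circ\phi_a$, observe that each is log-harmonic, and invoke Lemma~\ref{subharmSimple}(i) to conclude subharmonicity of the sum. Your treatment is in fact slightly more careful than the paper's in explicitly handling the anti-holomorphic nature of $\phi\circ\tau_a\circ\phi_a$ by passing to its holomorphic conjugate, whereas the paper simply writes $|(\phi\circ\tau_a\circ\phi_a)'(z)|^2$ and appeals to Lemma~\ref{logharmonic}.
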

\begin{proof}
  The rearranged measure $\zeta_a=\phi_a^*(\nu_a)$ can be rewritten as
  $$\zeta_a=(\phi\circ\phi_a)^*dz+(\phi\circ\tau_a\circ\phi_a)^*(dz)
  =\alpha(z)dz+\beta(z)dz$$
  where $\alpha(z)=|(\phi\circ\phi_a)'(z)|^2$ and 
  $\beta(z)=|(\phi\circ\tau_a\circ\phi_a)'(z)|^2.$
  It follows from Lemma~\ref{logharmonic} that
  $\log\alpha$ and $\log\beta$ are harmonic functions. 
  Therefore, $\alpha(z)$ and $\beta(z)$ are
  subharmonic by Lemma~\ref{subharmSimple}.
\end{proof}

\begin{proof}[Proof of inequality~\eqref{ourineqneum}]
  We provide details only in the case when the measure $\nu$ is
  simple. If the measure $\nu$ is multiple, then the proof is easier,
  see~\cite{GNP}.

  Without loss of generality, suppose that $M(\Omega)=\pi.$
  Let $a\in\mathcal{HC}$ be a cap
  such that the rearranged measure $\zeta_a$ is multiple.
  Using~\eqref{FoldedRayleigh} and taking into account that the
  functions $X_t$ are eigenfunctions corresponding to
  $\mu_1(\mathbf{D})$, we get
  \begin{align*}
    \mu_2(\Omega)
    \leq
    2\frac{\int_{\overline{\mathbf{D}}}|\nabla X_t|^2\,dz}
    {\int_{\overline{\mathbf{D}}} X_t^2\,d\zeta_a}
    =
    2\mu_1(\mathbf{D})\frac{\int_{\overline{\mathbf{D}}}X_t^2\,dz}
    {\int_{\overline{\mathbf{D}}} X_t^2\,d\zeta_a}
    =\mu_1(\mathbf{D})\frac{\int_{\overline{\mathbf{D}}}f^2(|z|)\,dz}
    {\int_{\overline{\mathbf{D}}} X_t^2\,d\zeta_a}
  \end{align*}
  Given $t\in\mathbf{S}^1$, choose $s\in\mathbf{S}^1$ such that
  $\langle t,s\rangle=0.$ Multiplicity of the rearranged measure
  $\zeta_a$  implies
  \begin{align*}
    \int_{\overline{\mathbf{D}}}X_t^2\,d\zeta_a
    =\frac{1}{2}\int_{\overline{\mathbf{D}}}(X_t^2+X_s^2)\,d\zeta_a
    =\frac{1}{2}\int_{\overline{\mathbf{D}}}f^2(|z|)\,d\zeta_a
  \end{align*}
  This leads to
  \begin{align}\label{robolomoulo}
    \mu_2(\Omega)
    \leq
    2\mu_1(\mathbf{D})
    \frac{\int_{\overline{\mathbf{D}}}f^2(|z|)\,dz}
    {\int_{\overline{\mathbf{D}}}f^2(|z|)\,d\zeta_a}
  \end{align}
  By Lemma \ref{subharm},  one can apply Lemma ~\ref{comparisonLebesgue} to the measure
  $\zeta_a$.  Hence, \eqref{robolomoulo}  implies 
  \begin{equation}
  \label{nonstr}
  \mu_2(\Omega)\leq 2\mu_1(\mathbf{D}).
  \end {equation}

  \subsubsection*{Proof of Theorem \ref{ThmNeumannDensity}} Inequality \eqref{ineqBandle} was already proved in Remark \ref{remarkBandle}.
    The proof of inequality~\ref{ourineqneumDensity} is almost identical to
    the one above. We use the measure $\nu=\phi^*(\rho dz)$. By Lemma~\ref{logharmonic} this measure is
    also of the form $\delta dz$ for some subharmonic function $\delta$.
    The rearranged measure $\zeta_a=\phi_a^*(\nu_a)$ can be rewritten
    $$\zeta_a=(\phi\circ\phi_a)^*(\rho \,dz)+
    (\phi\circ\tau_a\circ\phi_a)^*(\rho \,dz)=\alpha(z)dz+\beta(z)dz$$
    where (by Lemma~\ref{logharmonic}) $\alpha(z)$ and
    $\beta(z)$ are subharmonic. Hence, the statement of  Lemma~\ref{subharm} holds  in this case as well. 
    The rest of the proof is unchanged. \qed

  \subsubsection*{The inequality  \eqref{nonstr} is strict}
  Suppose that $\mu_2(\Omega)=2\mu_1(\mathbf{D})$.
  Then, by ~\eqref{robolomoulo} we get 
  \begin{gather*}
    \int_{\mathbf{D}}f^2(|z|)\,d\zeta_a\leq\int_{\mathbf{D}}f^2(|z|)\,dz.
  \end{gather*}
  Recall that  according to Lemma~\ref{subharm},  $\zeta_a=\delta(z)dz$ for some
  subharmonic function $\delta$.
  It follows from Lemma~\ref{comparisonLebesgue} that
  $\int_{\mathbf{D}}f^2(|z|)\,d\zeta_a=\int_{\mathbf{D}}f^2(|z|)\,dz$
  and that $\Delta\delta=0$.
  Now, by construction of $\delta$ in the proof of Lemma~\ref{subharm}
  we have 
  $\delta=\alpha+\beta$ with
  $\Delta\log\alpha=0$ and $\Delta\log\beta=0$.
  It follows from $\Delta\alpha\geq 0$, $\Delta\beta\geq 0$ and from
  $0=\Delta\delta=\Delta\alpha+\Delta\beta$
  that $\Delta\alpha=0$ and $\Delta\beta=0.$
  Hence, by Lemma~\ref{subharmSimple} (ii) the functions $\alpha$ and
  $\beta$ are constant. Now, from the proof of Lemma~\ref{subharm} we
  see that 
  $\alpha(z)=|(\phi\circ\phi_a)'(z)|^2$ and 
  $\beta(z)=|(\phi\circ\tau_a\circ\phi_a)'(z)|^2.$
  This implies that $\phi\circ\phi_a$ and $\phi\circ\tau_a\circ\phi_a$
  are dilations.
Recall that $a=\phi_a(\mathbf{D})$ and $a^*=\tau_a\circ\phi_a(\mathbf{D})$, where $a^*$  is the cap adjacent to $a$.
Hence,  $\phi(a)$ and $\phi(a^*)$ are disjoint disks. 
We get a contradiction, because
  \begin{gather*}
    \overline{\Omega}=
    \overline{\phi(a)}
    \cup
    \overline{\phi(a^*)}
  \end{gather*}
  is a connected set. This completes the proof of the first
  part of  Theorem \ref{ourThmNeumann}.
\end{proof}

\subsection{The inequalities  for $\mu_2$ and $\sigma_2$ are sharp}
\label{SectionSharp}
The goal of this section is to prove the second parts of
Theorems~\ref{ourThmNeumann} and~\ref{ourThmSteklov}.

\subsubsection*{Neumann boundary conditions}
The family  $\Omega_\varepsilon$ is constructed by joining two disks using
a thin passage.
\begin{figure}[h]
  \centering
  \psfrag{e}[][][1]{L}
  \psfrag{f}[][][1]{$\varepsilon$}
  \includegraphics[width=11cm]{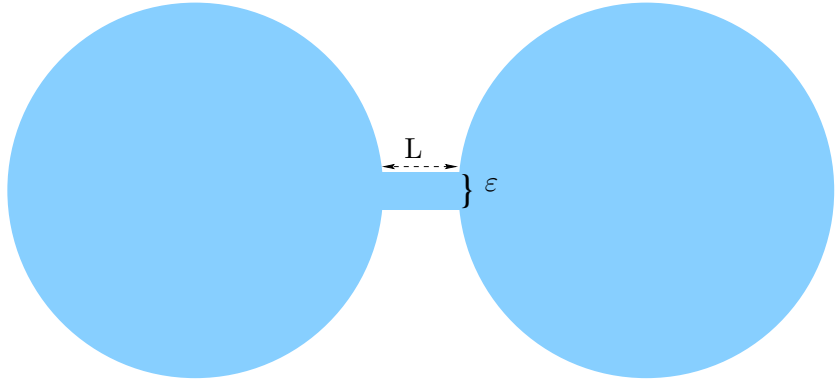}
  \caption{The domain $\Omega_\varepsilon$}
\end{figure}
More precisely,  let
$\Omega_\varepsilon=\D_1\cup P_\varepsilon \cup \D_2$,
where $\D_1$ and $\D_2$ are two
copies of the unit disk joined  by a rectangular passage
$P_\varepsilon$ of length $L$ and width $\varepsilon$.
It follows from~\cite{JM} (see also \cite{Anne, HSS}) that
the Neumann spectrum of $\Omega_\varepsilon$ converges to the disjoint
union of the Neumann spectra of $\D_1$ and  $\D_2$  and  the
Dirichlet spectrum of the operator $-\frac{d^2}{dx^2}$ acting on the
interval $[0,L]$. The first Dirichlet eigenvalue of $[0,L]$ is
$\frac{\pi^2}{L^2}$. It follows that for $L<1$ we have
\begin{gather*}
  \lim_{\varepsilon\rightarrow 0} \mu_0(\Omega_\varepsilon)=0,\
  \lim_{\varepsilon\rightarrow 0} \mu_1(\Omega_\varepsilon)=0,\\
  \lim_{\varepsilon\rightarrow 0} \mu_2(\Omega_\varepsilon)=\mu_1(\mathbf{D}).
\end{gather*}
Since $\lim_{\varepsilon\rightarrow 0}M(\Omega_\varepsilon)=2\pi$,
this completes  the proof.

\subsubsection*{Steklov boundary conditions}
The details of the proof can be found in~\cite{GP}. Let us
mention that simply joining two disks by a thin
passage does not work in the case of Steklov eigenvalues.  In fact, it was proved in~\cite{GP} that for the domains
$\Omega_\varepsilon$ defined above, the Steklov spectrum is
collapsing:
\begin{gather}
  \lim_{\varepsilon\rightarrow 0}\sigma_k(\Omega_\varepsilon)=0
  \quad \mbox{ for each } k=1,2,\dots.
\end{gather}
Instead, we use a family of domains  $\Sigma_{\varepsilon}$ , $\varepsilon \to 0+$, 
obtained by ``pulling two disks apart'' as shown  on Figure 3.  Similarly, taking  $k$ disks
pulled apart,  we show in \cite{GP}  that the Hersch--Payne--Schiffer inequality \eqref{hps} is sharp for all $k\ge 1$. 
\begin{figure}[h]
  \centering
  \psfrag{e}[][][1]{$\varepsilon$}
  \includegraphics[width=11cm]{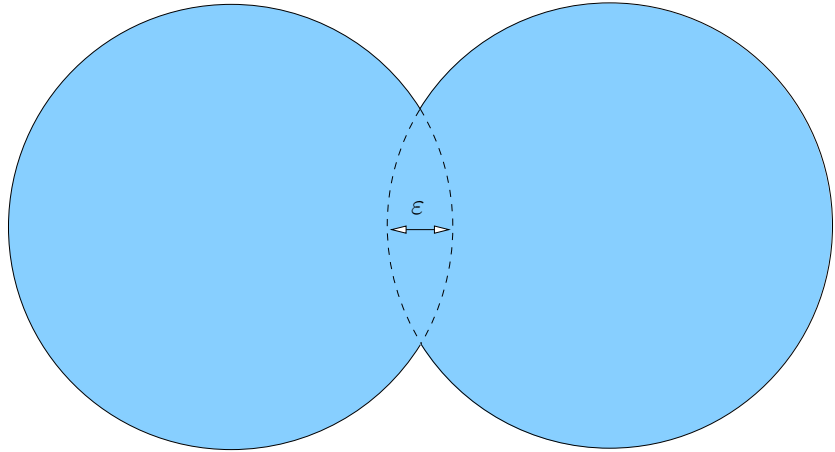}
  \caption{The domain $\Sigma_\varepsilon$}
  \label{figpullappart}
\end{figure}

\subsection*{Acknowledgments}  
Many ideas presented in this survey originated in 
\cite{GNP}, and we are grateful to Nikolai Nadirashvili for sharing with us his illuminating insights. 
We are also thankful to Michael Levitin, Marco Marletta and 
Mikhail Sodin for useful discussions.

\end{document}